\newtheorem{thm}{Theorem}
\newtheorem{lem}{Lemma}
\newtheorem{defi}{Definition}
\newtheorem{asump}{Assumption}
\newtheorem{appli}{Application}
\def\Re{\text{Re}}
\def\d{\mathrm{d}}
\begin{document}
\author{Pranendu Darbar}
\address{Institute of Mathematical Sciences, 
CIT Campus, Taramani, Chennai 600113, India}
\email[Pranendu Darbar]{dpranendu@imsc.res.in}

\begin{abstract}
In this paper, we find asymptotic formula for  the following sum with explicit error term:
\[M_{x}(g_{1}, g_{2}, g_3)=\frac{1}{x}\sum_{n\le x}g_{1}(F_1(n))g_{2}(F_2(n))g_{3} (F_3(n)),\]
where $F_1(x), F_2(x)$ and $F_3(x)$ are polynomials with integer coefficients and $g_1,g_2,g_3$ are multilpicative functions with modulus less than or equal to $1.$

Moreover, under some assumption on $g_1,g_2,$ we prove that as $x\rightarrow \infty,$
\[\frac{1}{x}\sum\limits_{n\le x}g_1(n+3)g_2(n+2)\mu(n+1)=o(1)\]
and assuming $2$-point Chowla type conjecture we show that as $x\rightarrow \infty,$
\[\frac{1}{x}\sum\limits_{n\le x}g_1(n+3)\mu(n+2)\mu(n+1)=o(1).\]
 
\end{abstract}

\title{Triple Correlations of Multiplicative Functions}
\maketitle

\vskip 4mm

\begin{center} \section{Introduction} \end{center}

  Let $ g_{j} : \mathbb{N} \rightarrow \mathbb{C} $ be multiplicative functions such that $|g_j(n)|\le 1$ for all $n$ and $j=1,2,3.$ Let $F_1(x),F_2(x),F_3(x)$ are polynomials with integer coefficients.
 
Consider the following triple correlation function:
\begin{eqnarray}
M_{x}(g_{1}, g_{2}, g_{3})=\frac{1}{x}\sum_{n\le x}g_{1}(F_1(n))g_{2}(F_2(n))g_{3}(F_3(n)),
\end{eqnarray}

In \cite{KAT}, Kat\'{a}i studied the  asymtotic bahaviour of the above sum $(1)$ when $F_j(x),j=1,2,3$ are special polynomials and some assumptions on $g_j,j=1,2,3,$ but did not provide error term. In \cite{ST4}, Stepanauskas studied the asymptotic formula for sum $(1)$ with explicit error term when $F_j(x),j=1,2,3$ are linear polynomials and  $g_1,g_2,g_3$ are close to $1$ (see definition $1$). Recently, Klurman \cite{KL} studied the double correlation function (i.e. the sum $(1)$ with $g_3=1$).

Estimations of $(1)$ can be used to get information on the behaviour of the limit distribution of the sum
\begin{align}
f_1(F_1(n))+f_2(F_2(n))+f_3(F_3(n)),
\end{align}
where $f_1,f_2$ and $f_3$ are real-valued additive functions. 

From onwards, let $F(n);F_1(n),F_2(n),F_3(n)$ be positive integer-valued polynomials with integer coefficients and these are not divisible by the square of any irreducible polynomial. Also suppose that $F_j(n),F_k(n)$ are relatively primes for $j\neq k $ and for all $n$.
Let $v$ and $v_j$ denote the degree of the polynomials $F(n)$ and $F_j(n)$ respectively.

In this paper, we will investigate the following sums with various assumptions on $g_j,j=1,2,3$:
\begin{eqnarray}
M_x^{'}(g_1,g_2,g_3)=\frac{1}{x}\sum\limits_{n\le x}g_1(n+3)g_2(n+2)g_3(n+1)
\end{eqnarray} 
and asymptotic formula of the following triple correlation function with explicit error term which is a improvement of a theorem of Kat\'{a}i (\cite{KAT}, Theorem $5$) with respect to explicit error term:
\begin{eqnarray}
M_x^{''}(g_1,g_2,g_3)=\frac{1}{x}\sum\limits_{n\le x}g_1(F_1(n))g_2(F_2(n))g_3(F_3(n)),
\end{eqnarray}
where $F_1(x),F_2(x)$ and $F_3(x)$ are polynomials as above of degree $\ge 2.$
\begin{defi}
A multiplicative function $g_j$ is said to be close to $1$ if 
\begin{align}
\sum\limits_{p}\frac{g_j(p)-1}{p}<\infty.
\end{align}
\end{defi}
\begin{defi}
A multiplicative function $g_j$ is called good function if there exists a $\kappa\in \mathbb{C}$ such that for each $u>0$
\begin{align}
\sum\limits_{p\le x}|g_j(p)-\kappa|\ll \frac{x}{(\log x)^u}.
\end{align}
\end{defi}
\subsection*{Outline}
In section $5,$ we formulate the sums $(3)$ and $(4)$ in terms of main term and explicit error term.

For the asymptotic behaviour of the sum $(3)$ it seems to be very difficult if we take  either $g_1, g_2,$ and $g_3$ are equal to Mobius function or any two of them are equal to Mobius function and other one is close to some fixed complex number. 

In section $6,$ firstly we investigate the asymptotic behaviour of the sum $(3)$ when $g_1, g_2$ are close to $1$ and $g_3$ is the Mobius function. Secondly, we investigate the sum $(3)$ in terms of main term and explicit error term when $g_3$ is a good function.

In section 7, under some assumption we investigate the asymtotic behaviour of the sum $(3)$ when $g_1$ is close to $1$ and $g_2,g_3$ are Mobius function.

In section 8, we formulate some applications the above sums $(3)$ and $(4).$
\subsection*{Acknowledgement}
 I would like to express my  appreciation to my thesis supervisor Anirban Mukhopadhyay for his valuable and constructive suggestions during the planning and development of this paper. I thank Gediminas Stepanauskas for providing me with  preprint of \cite{ST3},\cite{ST4},\cite{JS}.

\section{\textbf{Notations}}
Throughout the paper $p$ and $q$ denote primes; $j,k,l,m$ and $n$ are natural numbers;  $c,c^{'},c_1,c_2,\cdots$ are absolute constants; $c_F,c_{F_1},c_{F_2},c_{F_3}$ are constants depending on $F, F_1, F_2, F_3$ and $\wp$ is the set of all primes. 

In section 3, we  use the following notations for Theorem $1$ :
\begin{align*}
&S(r,x,g_{j})=\sum_{r<p\le x+4-j}\frac{|g_{j}(p)-1|^{2}}{p},\quad S(r,x)=\sum_{j=1}^{3}S(r,x,g_{j}),r>0,\\
&P(x)=\prod\limits_{p\le x}w_{p} \quad \text{and}\quad P(r,x)=\prod_{r<p\le x} w_{p},
\end{align*}
where
\begin{eqnarray}
w_{p}=\underset{\substack{\left(p^{m_1},p^{m_2}\right)=1\\ \left(p^{m_2},p^{m_3}\right)=1\\ \left(p^{m_1},p^{m_3}\right) |2}}{\sum\limits_{\substack{m_{1}=0 \\ \left(p^{m_1},3\right)=1}} ^{\infty}\sum\limits_{\substack{m_{2}=0\\ \left(p^{m_2},2\right)=1}}^{\infty}\sum\limits_{m_{3}=0}^{\infty}
}\frac{h_{1}(p^{m_1})h_{2}(p^{m_2})h_{3}(p^{m_3})}{[p^{m_1},p^{m_2},p^{m_3}]}.
 \end{eqnarray}
We  use the following notations for Theorem $2$ :

Let $\varrho(d)$ and $\varrho_j(d)$ denote the number of congruent solutions of the congruences $F(n)\equiv 0\pmod d$ and $F_j(n)\equiv 0\pmod d$ respectively.

Let $\varrho(d_1,d_2,d_3)$ be the number of solutions of the congruence system
\[F_j(n)\equiv 0\pmod {d_j}\quad j=1,2,3.\]
Let $D_\gamma $ denote the set of those tuples $\left\lbrace d_1,d_2,d_3\right\rbrace$ of natural numbers, for which all the prime factors of $d_i$ do not exceed $\gamma.$
\begin{align*}
&S^{'}(r,x)=\sum\limits_{r<p\le x}\sum\limits_{j=1}^{3}\frac{\left|g_j(p)-1\right|^2\varrho_j(p)}{p},\quad T(x)=\sum\limits_{j=1}^{3}\sum\limits_{x<p\le F_j(x)}\frac{\left|g_j(p)-1\right|^2\varrho_j(p)}{p},\\
&C(r)=\sum\limits_{j=1}^{3}\sum\limits_{m=1}^{v_j-1}\sum\limits_{\substack{p^m\le F_j(x)\\p>r}}\left|g_j(p^m)-1\right|\varrho_j(p^m),\\
&P^{'}(x)=P_1(\gamma)P_2(\gamma, x) \quad \text{and} \quad P^{'}(r,x)=\prod_{r<p\le x} w_{p}^{'},
\end{align*}
where
\begin{align}
&P_1(\gamma)=\sum\limits_{\left\lbrace d_1,d_2,d_3\right\rbrace \in D_\gamma} \frac{h_1(d_1)h_2(d_2)h_3(d_3)}{[d_1,d_2,d_3]}\varrho(d_1,d_2,d_3),\\
&P_2(\gamma,x)=\prod\limits_{\gamma <p\le x}\biggl(1+\sum\limits_{m=1}^{\infty}\sum\limits_{j=1}^{3}\frac{h_j(p^m)\varrho_j(p^m)}{p^m}\biggr):=\prod\limits_{\gamma <p\le x}w_{p}^{'}.
\end{align}
For Theorem $3$ and $4$ we need the following notations:
 \begin{align*}
 &\theta_{\tau}(n)=\prod\limits_{p|n}\biggl(1+\sum\limits_{m=1}^{\infty}\frac{g_3(p^m)}{p^{m(1+i\tau)}}\biggr)^{-1},\tau \in \mathbb{R},\\
 &Q(r)=\prod\limits_{p\le r}\biggl(1-\frac{2\theta_{\tau}(p)}{p-1}+\theta_{\tau}(p)\sum\limits_{m=1}^{\infty}\frac{g_1(p^m)+g_2(p^m)}{p^m}\biggr),\\
 &P^{''}(r,x)=\prod\limits_{r<p\le x}\left(1-\frac{1}{p}\right)\biggl(1+\sum\limits_{m=1}^{\infty}\frac{g_1(p^m)+g_2(p^m)}{p^m}\biggr),\\
 &S^{''}(r,x)=\sum\limits_{j=1}^{2}\sum\limits_{r<p\le x+4-j}\frac{|g_j(p)-1|^2}{p}\quad \text{and} \quad M_x(g_3)=\frac{1}{x}\sum\limits_{n\le x}g_3(n).
 \end{align*}  
 \section{\textbf{Statements of Theorems}}
We begin with the asymtotic formula for the sum $(3)$ with explicit error term which is a special case of a theorem of Stepanauskas(\cite{ST4}).
\begin{thm}
Let $g_1$, $g_2$ and $g_3$ be multiplicative functions with modulus less than or equal to $1$. Then there exists a positive absolute constant $c$ such that for all $x\ge r\ge 2$ and for all $\frac{2}{3}<\alpha <1,$ we have
\[M_{x}^{'}(g_{1}, g_{2}, g_{3})=P(x)+  O \Bigl(x^{2-3\alpha}\exp\Bigl(c\frac{r^{\alpha}}{\log r}\Bigr)+(S(r,x))^{\frac{1}{2}}+(r\log r)^{-\frac{1}{2}}\Bigr).\]
\end{thm}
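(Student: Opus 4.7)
The plan is to follow a Dirichlet convolution technique in the spirit of Stepanauskas. For each $j = 1, 2, 3$, write $g_j = 1 * h_j$ so that $h_j$ is the multiplicative function with $h_j(p) = g_j(p) - 1$ and $|h_j(p^k)| \le 2$ for all $k \ge 1$. Separating each factor $g_j(n + (4-j))$ into its $r$-smooth and $r$-rough parts and applying the inversion only to the rough parts yields, after interchanging summation,
\[\sum_{n \le x} g_1(n+3) g_2(n+2) g_3(n+1) = \sum^{*}_{d_1,d_2,d_3} h_1(d_1) h_2(d_2) h_3(d_3) \sum_{\substack{n \le x \\ d_j \mid n+(4-j)}} \prod_{j=1}^{3} \widetilde{g}_j^{(r)}(n+(4-j)),\]
where the starred sum restricts each $d_j$ to be $r$-rough and imposes the coprimality conditions $(d_1, d_2) = (d_2, d_3) = 1$ and $(d_1, d_3) \mid 2$ forced by joint solvability of $d_j \mid n + (4-j)$; the auxiliary conditions $(d_1, 3) = 1$ and $(d_2, 2) = 1$ appearing in the definition of $w_p$ arise from the analogous constraints involving $d_3$.

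For each admissible $r$-rough tuple $(d_1, d_2, d_3)$, the inner sum over $n$ in an arithmetic progression of modulus $[d_1, d_2, d_3]$ can be evaluated by using the periodicity of each $\widetilde{g}_j^{(r)}$ modulo a suitable power of the primorial of $r$. This produces a main contribution of the form $(x/[d_1, d_2, d_3]) \cdot A_r + O(1)$, where $A_r$ is a product of local averages over primes $p \le r$. Reassembling the outer sum over $(d_1, d_2, d_3)$ with $A_r$, the multiplicative structure collapses into the Euler product $\prod_p w_p = P(x)$, the local factor at each prime being precisely the triple sum over $(m_1, m_2, m_3)$ displayed in the notations.

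The three error terms are then controlled as follows. Truncating the rough tuples at a parameter chosen so that the accumulated $O(1)$ Chinese-Remainder residues, weighted by the count of admissible tuples (which is bounded via Rankin's method on squarefree $r^\alpha$-smooth integers by $\ll \exp(c r^\alpha/\log r)$), produces the first error $x^{2 - 3\alpha}\exp(c r^\alpha/\log r)$ after dividing by $x$. The tail from beyond the cut-off is handled by Cauchy-Schwarz applied to $\sum |h_j(d_j)|^2/d_j$, which is bounded by $\prod_{r < p \le x}(1 + |g_j(p)-1|^2/p)$ and pulls out $S(r, x, g_j)$; summing over $j$ yields the second error $S(r, x)^{1/2}$. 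The third discrepancy comes from the small-prime factor $A_r$: comparing it with the correct finite truncation of $P(x)$ and invoking Mertens' estimate $\sum_{p \le r} 1/p = \log\log r + O(1)$ together with a square-root gain from a second-moment computation extracts $(r \log r)^{-1/2}$.

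The principal obstacle is the careful bookkeeping of the coprimality constraints through the smooth-rough decomposition so that the multiplicative structure reassembles cleanly into the Euler product $\prod_p w_p$. The intricate local constraints $(p^{m_1}, p^{m_2}) = (p^{m_2}, p^{m_3}) = 1$, $(p^{m_1}, p^{m_3}) \mid 2$, $(p^{m_1}, 3) = 1$, and $(p^{m_2}, 2) = 1$ inside $w_p$ must emerge as the exact local factor at each prime, forcing the global decomposition to track all these constraints simultaneously. Combined with a Rankin-type smooth-number bound sharp enough to produce the exponent $x^{2-3\alpha}$ under the condition $\alpha > 2/3$, this is the technical heart of the argument.
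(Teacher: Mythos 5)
Your decomposition is inverted relative to the one that actually works, and the inversion is fatal rather than cosmetic. The paper itself does not prove Theorem 1 (it is quoted as a special case of Stepanauskas), but the scheme it uses for Theorems 2--5 is the same one needed here: factor $g_j=g_{jr}\cdot g_{jr}^{*}$, apply the convolution expansion $g_{jr}=1\ast h_{jr}$ only to the \emph{small-prime} parts, so that $h_{jr}$ is supported on $r$-smooth integers, and treat the \emph{large-prime} product $g_{1r}^{*}g_{2r}^{*}g_{3r}^{*}-P(r,x)$ by Warlimont's technique (discard a sparse bad set, write each surviving factor as $\exp\bigl(\sum_{p^m\| n+4-j,\ p>r}(g_j(p^m)-1)+O(\cdot)\bigr)$, and apply the Tur\'an--Kubilius inequality to the additive function inside). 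You do the opposite: convolution over $r$-rough moduli $d_j$, with the smooth parts left inside the inner sum. This breaks both main error estimates. First, the bound $\exp(cr^{\alpha}/\log r)$ is an Euler-product/Rankin bound of the shape $\prod_{p\le r}\bigl(1+\sum_m |h(p^m)|p^{-m(1-\alpha)}\bigr)$, valid precisely because the $h_{jr}$ live on $r$-smooth integers; for $r$-rough $d_j$ the corresponding product runs over $r<p\le x$ and has size $\exp(cx^{\alpha}/\log x)$, which swamps $x^{2-3\alpha}$ entirely. Your own text betrays the inconsistency: the tuples are declared rough, yet their count is bounded ``via Rankin's method on squarefree smooth integers.'' Second, the term $S(r,x)^{1/2}$ cannot be produced by Cauchy--Schwarz on $\sum_d|h_j(d)|^2/d$: over rough $d$ that sum equals $\prod_{r<p\le x}\bigl(1+\sum_m|h_j(p^m)|^2p^{-m}\bigr)=\exp\bigl(O(S(r,x))\bigr)\ge 1$, a multiplicative factor that does not vanish when $S$ is small; and since only $\sum_{p>r}|g_j(p)-1|^2/p$ is controlled while $\sum_{p>r}|g_j(p)-1|/p$ may grow like $\log\log x$, no absolutely convergent expansion over rough moduli is available at all. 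The $S^{1/2}$ error is exactly the $L^1$-concentration of the additive function $\sum_{p^m\|n+4-j,\,p>r}(g_j(p^m)-1)$ around its mean, i.e.\ the Tur\'an--Kubilius step, which is absent from your sketch and cannot be replaced by a positive-term bound.

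Two further points. Your evaluation of the inner sums rests on the smooth parts being periodic ``modulo a suitable power of the primorial of $r$,'' which is false: $g_{jr}(n)$ depends on the full (unbounded) exponent of each prime $p\le r$ dividing $n$. The correct arrangement avoids this entirely, because with the convolution on the smooth side the inner sum is a bare count of $n\le x$ in a residue class modulo $[d_1,d_2,d_3]$. Also, $(r\log r)^{-1/2}$ does not come from a Mertens-type comparison of local averages; it comes from the tails $\sum_{p>r}\sum_{m\ge 2}p^{-m}\ll (r\log r)^{-1}$ contributed by higher powers of large primes in the Warlimont/Tur\'an--Kubilius step. To repair the proposal: swap the roles of the two parts, write $M_x^{'}-P(x)$ as the sum of a smooth-convolution piece (compared with $P(r)$, estimated by Rankin exactly as you intend) and a rough piece $\frac{1}{x}\sum_{n\le x}\bigl|g_{1r}^{*}(n+3)g_{2r}^{*}(n+2)g_{3r}^{*}(n+1)-P(r,x)\bigr|$, and estimate the latter as in the paper's treatment of $S_2$ and $T_2$.
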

The aim of this paper is to prove the following statements:
\begin{thm}
Let $F_j(x),j=1,2,3$ be polynomials as above of degree greater than or qual to $2.$ Let $g_1, g_2$ and $g_3$ be multiplicative functions whose modulus do not exceed $1.$ Then there exists a positive absolute constant $c$ and a natural number $\gamma$ such that for all $x\ge r\ge \gamma$ and for all $1-\frac{1}{v_1+v_2+v_3}<\alpha <1,$ we have
\begin{eqnarray*}
M_x^{''}(g_1,g_2,g_3)-P^{'}(x)&\ll & \left(F_1(x)F_2(x)F_3(x)\right)^{1-\alpha}\exp\left(c\frac{r^\alpha}{\log r}\right)+(S^{'}(r,x))^{\frac{1}{2}}\\
&+&(T(x))^{\frac{1}{2}}+(r\log r)^{-\frac{1}{2}}+\frac{1}{x}C(r)+\frac{1}{\log x}.
\end{eqnarray*}
\end{thm}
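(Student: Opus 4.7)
The plan is to adapt Stepanauskas's strategy from Theorem $1$, accommodating two essential new features due to $\deg F_j\ge 2$: the residue count in progressions is replaced by the polynomial-congruence multiplicities $\varrho_j(d)$ and $\varrho(d_1,d_2,d_3)$, and we must track primes $p\in(x,F_j(x)]$ that can still divide $F_j(n)$ for some $n\le x$.

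Write $g_j=1*h_j$ in the sense of Dirichlet convolution, where $h_j(p^m)=g_j(p^m)-g_j(p^{m-1})$, and expand:
\begin{align*}
xM_x''(g_1,g_2,g_3)=\sum_{d_1,d_2,d_3}h_1(d_1)h_2(d_2)h_3(d_3)\,N_x(d_1,d_2,d_3),
\end{align*}
where $N_x(d_1,d_2,d_3)=\#\{n\le x:F_j(n)\equiv 0\pmod{d_j},\ j=1,2,3\}$. The Chinese Remainder Theorem, applied using the pairwise coprimality of $F_1,F_2,F_3$, gives $N_x(d_1,d_2,d_3)=x\varrho(d_1,d_2,d_3)/[d_1,d_2,d_3]+O(\varrho(d_1,d_2,d_3))$. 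Restricting the main-term sum to triples supported on primes $\le x$ and splitting at $\gamma$, the local Euler factors for $p\le\gamma$ retain the joint count $\varrho(d_1,d_2,d_3)$ (producing $P_1(\gamma)$), while for $\gamma<p\le x$ the $F_j$ are pairwise coprime modulo $p$, so at most one of them is divisible by $p$ and the local factor collapses to $w_p'$ (producing $P_2(\gamma,x)$). Multiplying yields $P'(x)$.

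The errors are assembled as follows. The $O(\varrho(d_1,d_2,d_3))$ contribution is controlled by Rankin's trick with parameter $\alpha$ against the trivial bound $d_j\le F_j(x)$, producing $(F_1(x)F_2(x)F_3(x))^{1-\alpha}\exp(cr^\alpha/\log r)$ after dividing by $x$; the constraint $\alpha>1-1/(v_1+v_2+v_3)$ is precisely what keeps this term useful. Cauchy--Schwarz on the tail over primes $r<p\le x$---with one factor a main-term-type quantity and the other the sum $\sum|g_j(p)-1|^2\varrho_j(p)/p$---yields $(S'(r,x))^{1/2}$, and the analogous estimate over $x<p\le F_j(x)$, using that each such prime divides $F_j(n)$ at most once for $n\le x$, yields $(T(x))^{1/2}$. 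Higher prime powers $p^m$ with $m\ge 2$ and $p>r$ are absorbed into $\frac{1}{x}C(r)$, while Mertens-type manipulations of the truncated Euler product supply the residual $(r\log r)^{-1/2}+1/\log x$.

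The main obstacle is the uniform control of $\varrho_j(p^m)$ and $\varrho(d_1,d_2,d_3)$. Lagrange's theorem bounds $\varrho_j(p)\le v_j$, and since $F_j$ has no repeated irreducible factor, Hensel's lemma upgrades this to a bound on $\varrho_j(p^m)$ for every prime $p$ outside the finite set of primes dividing the discriminants of $F_j$ or the pairwise resultants of $F_j$ and $F_k$; the hypothesis $r\ge\gamma$ is exactly what ensures we work above all such bad primes. A secondary difficulty, absent in the linear case of Theorem $1$, is the new intermediate range $(x,F_j(x)]$, which forces Cauchy--Schwarz to be applied twice and produces the two distinct error terms $(S'(r,x))^{1/2}$ and $(T(x))^{1/2}$.
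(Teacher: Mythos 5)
Your proposal has a genuine gap at its foundation: you expand the \emph{full} functions $g_j=1*h_j$, with $h_j(p^m)=g_j(p^m)-g_j(p^{m-1})$ at every prime, and then claim that Rankin's trick on the $O(\varrho(d_1,d_2,d_3))$ error produces $(F_1(x)F_2(x)F_3(x))^{1-\alpha}\exp(cr^{\alpha}/\log r)$. But with the untruncated $h_j$ the relevant quantity
\[
\sum_{d_j\le F_j(x)}\frac{|h_j(d_j)|\varrho_j(d_j)}{d_j^{1-\alpha}}\le\prod_{p\le F_j(x)}\Bigl(1+\sum_{m\ge 1}\frac{|h_j(p^m)|\varrho_j(p^m)}{p^{m(1-\alpha)}}\Bigr)
\]
can only be bounded by $\exp\bigl(c\,F_j(x)^{\alpha}/\log F_j(x)\bigr)$, since $|h_j(p)|=|g_j(p)-1|$ may be of size $2$ at \emph{every} prime up to $F_j(x)$; that factor is super-polynomial in $x$ and destroys the estimate. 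Indeed the parameter $r$ has no presence anywhere in your expansion, so it cannot legitimately appear in your error term. The paper avoids this by the multiplicative splitting $g_j=g_{jr}\cdot g_{jr}^{*}$, where $g_{jr}$ agrees with $g_j$ on primes $\le r$ and equals $1$ above $r$: only $g_{jr}=1*h_{jr}$ is expanded as a convolution, and because $h_{jr}$ is supported on $r$-smooth integers, Rankin's trick genuinely yields $\exp(c_F r^{\alpha}/\log r)$.

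The second, related, gap is the large-prime part. You assert that ``Cauchy--Schwarz on the tail over primes $r<p\le x$'' yields $(S'(r,x))^{1/2}$, but applied to a convolution tail this is not a valid step: what must be estimated is the deviation of $g_{1r}^{*}(F_1(n))g_{2r}^{*}(F_2(n))g_{3r}^{*}(F_3(n))$ from the Euler product $P'(r,x)$, averaged over $n\le x$. The paper does this by first discarding the exceptional set of $n$ for which some $p^m\Vert F_k(n)$ with $p>r$ and $|1-g_k(p^m)|>1/2$ (costing $S'(r,x)+T(x)+(r\log r)^{-1}+C(r)/x+1/\log x$), and then, on the complement, writing both quantities as exponentials and using $|e^{u}-e^{v}|\le|u-v|$ for $\mathrm{Re}\,u,\mathrm{Re}\,v\le 0$ together with $\log(1+z)=z+O(|z|^2)$. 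The resulting first-order difference is an additive-function deviation controlled by a \emph{polynomial} Tur\'an--Kubilius inequality (the paper's Lemma $2$, which itself has to be proved, and which relies on Erd\H{o}s's bounds $\varrho(p^{\alpha})\le c$); this is the actual source of $(S'(r,x))^{1/2}$ and $(T(x))^{1/2}$. Your outline names the right error terms, but does not contain the two ideas --- smooth-supported truncation at $r$, and the exceptional-set/exponential comparison backed by a polynomial Tur\'an--Kubilius inequality --- that make them provable. Incidentally, your claim that a prime $p\in(x,F_j(x)]$ divides $F_j(n)$ at most once for $n\le x$ also fails in general (for $v_j\ge 3$ take $p$ just above $x$, so that $p^2\le F_j(x)$); higher powers of large primes are precisely what the term $C(r)/x$ is there to absorb.
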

\begin{thm}
Let $g_1$ and $g_2$ be multiplicative functions which do not exceed $1$ and 
\begin{eqnarray}
\sum\limits_{p}\sum\limits_{i=1}^{2}\frac{|g_{i}(p)-1|^2}{p}< \infty.
\end{eqnarray}
Then as $x\rightarrow \infty$,
\[M_{x}^{'}(g_{1}, g_{2}, \mu)=\frac{1}{x}\sum_{n\le x}g_{1}(n+3)g_{2}(n+2)\mu (n+1)=o(1).\] 
\end{thm}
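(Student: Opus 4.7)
My plan is to adapt the Möbius-inversion strategy underlying Theorem 1, but to leave the factor $\mu(n+1)$ intact and exploit its cancellation through the prime number theorem in arithmetic progressions. Theorem 1 cannot be applied verbatim with $g_3=\mu$, since the corresponding term $S(r,x,\mu)^{1/2}=2\bigl(\sum_{r<p\le x+1}1/p\bigr)^{1/2}$ in its error does not tend to $0$ as $r,x\to\infty$; the Möbius factor must therefore be handled by a dedicated argument.

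First I write $g_i=\mathbf{1}\star h_i$ for $i=1,2$, where $h_i$ is the multiplicative function with $h_i(p)=g_i(p)-1$ and $|h_i(p^k)|\le 2$. The hypothesis $\sum_p|g_i(p)-1|^2/p<\infty$ gives $\sum_p|h_i(p)|^2/p<\infty$, from which the Euler product yields $\sum_d|h_i(d)|^2/d<\infty$. Swapping the order of summation,
\[
\sum_{n\le x}g_1(n+3)g_2(n+2)\mu(n+1)=\sum_{(d_1,d_2)=1}h_1(d_1)h_2(d_2)\,J(d_1,d_2;x),
\]
where $J(d_1,d_2;x)=\sum_{m\le x+1,\,m\equiv b(d_1,d_2)\,(\mathrm{mod}\,d_1d_2)}\mu(m)$ with $b(d_1,d_2)$ determined by the Chinese remainder theorem. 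The restriction $(d_1,d_2)=1$ appears because any common prime divisor of $d_1$ and $d_2$ would have to divide $(n+3)-(n+2)=1$.

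Next I split the $(d_1,d_2)$-sum at a threshold $Y=Y(x)\to\infty$. For $D:=d_1d_2\le Y$, with $Y$ a suitable power of $\log x$, Siegel--Walfisz yields $|J(d_1,d_2;x)|\ll x(\log x)^{-A}$ uniformly (after the standard reduction when $\gcd(b,D)>1$ via the squarefree constraint on $\mu$); combined with the Rankin bound $\sum_{d\le Y}|h_i(d)|\ll Y(\log Y)^{O(1)}$, the total contribution is $o(x)$. For $D>Y$ I use the trivial estimate $|J|\ll x/D+1$. The $x/D$ piece is $o(x)$ by Cauchy--Schwarz against the tails $T_i(Z):=\sum_{d>Z}|h_i(d)|^2/d\to 0$, summed carefully across the two variables.

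The main obstacle is the ``$+1$'' contribution in the large-$D$ regime. Because the hypothesis controls only the second moment of $h_i$ on primes, the first moment $\sum_d|h_i(d)|/d$ may diverge, so a direct absolute-value bound on $\sum_{D>Y}|h_1(d_1)h_2(d_2)|$ is insufficient. I expect the resolution to involve a finer truncation (for example restricting to $d_1,d_2\le x^{1/2-\varepsilon}$ and treating the remainder by its natural Cauchy--Schwarz counterpart) together with a Rankin argument that exploits the sparsity of the support of $|h_1|\star|h_2|$ implied by the second-moment hypothesis; this is the technical crux of the argument.
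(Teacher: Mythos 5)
Your skeleton for the ``small moduli'' part is sound and matches the paper's: expand via a convolution $1\ast h$, restrict to coprime pairs $(d_1,d_2)$ by CRT, use uniform cancellation of $\mu$ in arithmetic progressions below a threshold $y$ that is a power of $\log x$ (the paper uses Davenport's bound $\sum_{n\le x}\mu(n)\exp(2\pi i n\theta)\ll x(\log x)^{-K}$, which has the small advantage over Siegel--Walfisz of requiring no coprimality reduction), and treat large moduli trivially. But the gap you flag at the end is genuine and is exactly where your argument fails, and the fixes you sketch (truncation at $x^{1/2-\varepsilon}$, Cauchy--Schwarz, Rankin on the support) cannot repair it. Under hypothesis $(10)$ alone the first moments $\sum_d|h_i(d)|/d$ can truly diverge (take $|g_i(p)-1|=(\log p)^{-1/2}$), so once you have expanded the \emph{full} $g_i$ as $1\ast h_i$, the tail $\sum_{d_1d_2>Y}|h_1(d_1)h_2(d_2)|/(d_1d_2)$ is itself a divergent series and no choice of $Y$ saves it; even your Cauchy--Schwarz treatment of the $x/D$ piece breaks down, because the companion factor $\sum_{d_1,d_2\le x}1/(d_1d_2)\asymp(\log x)^2$ diverges while the tail of the convergent square-moment series tends to $0$ with no quantifiable rate, so the product need not be $o(1)$.

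The missing idea in the paper is a two-scale factorization \emph{at the level of primes}, not of divisors: write $g_i=g_{ir}\cdot g_{ir}^{*}$, where $g_{ir}$ agrees with $g_i$ on prime powers $p^m$ with $p\le r$ and equals $1$ otherwise, and $g_{ir}^{*}$ is the complementary ``rough'' part. Only the smooth part is expanded, $g_{ir}=1\ast h_{ir}$, and since $h_{ir}$ is supported on $r$-smooth integers with $|h_{ir}(p^m)|\le 2$, one gets unconditionally $\sum_d|h_{ir}(d)|/d\ll(\log r)^{c}$ and, by Rankin's trick, $\sum_d|h_{ir}(d)|/d^{1-\gamma}\ll\exp(cr^{\gamma}/\log r)$; with $y=\log x$ and $r=(\log\log x)^{3/2}$ these bounds make both the $x/(d_1d_2)$ and the ``$+1$'' tail contributions $O(x(\log x)^{-1/2})$ --- precisely the crux you could not close. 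The rough part is then handled by a separate averaging argument (Warlimont's technique): one shows via the Tur\'an--Kubilius inequality that
\begin{equation*}
\frac{1}{x}\sum_{n\le x}\bigl|g_{1r}^{*}(n+3)g_{2r}^{*}(n+2)-R(r,x)\bigr|\ll\bigl(S''(r,x)\bigr)^{1/2}+(r\log r)^{-1/2},
\end{equation*}
where $R(r,x)$ is an Euler product over $r<p\le x$, and this is where hypothesis $(10)$ is actually consumed: it guarantees the right-hand side tends to $0$ as $r,x\to\infty$. In short, the second-moment hypothesis is never used to control convolution tails (it cannot be); it is used to show the large-prime parts of $g_1,g_2$ are constant on average, while the convolution machinery is confined to the $r$-smooth parts where everything converges by fiat.
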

\begin{thm}
Let $g_1, g_2$ and $g_3$ be multiplicative functions whose modulus do not exceed $1$ and $g_3$ is a good function.

Assume further that there exist a positive constant $c_1$ such that 
\begin{align}
\biggl|1+\sum\limits_{k=1}^{\infty}\frac{g_3(2^k)}{2^{k(1+i\xi)}}\biggr|\ge c_1
\end{align}
 for $\xi=0,$ if $g_3$ is real valued and for all $\xi \in \mathbb{R},$ if $g_3$ is not real valued. 
Then there exists a positive absolute constant $c$ and a real $\tau,|\tau|\le (\log x)^{1/19},$ such that for all $x\ge r\ge 2$ and for all $\frac{1}{2}<\alpha<\frac{5}{9},$ we have
\begin{align*}
&M_x^{'}(g_1,g_2,g_3)-M_x(g_3)P^{''}(r,x)Q(r)\ll x^{1-2\alpha}\exp\left(c\frac{r^{\alpha}}{\log r}\right)+\frac{(\log r)^{c}}{(\log x)^{c^{'}}}\\
&+\frac{\exp\left(c(\log\log r)^2\right)}{(\log x)^{1/19}}+(S^{''}(r,x))^{\frac{1}{2}}+(r\log r)^{-\frac{1}{2}}.
\end{align*}
For real-valued $g_3$ we may set $\tau =0.$
\end{thm}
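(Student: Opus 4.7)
The plan is to isolate $g_3$ via a quantitative Hal\'asz-type structural theorem and then reduce to a twisted double correlation of $g_1,g_2$ handled by the techniques underlying Theorem 1. By the Hal\'asz--Montgomery--Tenenbaum theorem applied to the good multiplicative function $g_3$, there is a real $\tau$ with $|\tau|\le(\log x)^{1/19}$ such that $g_3(n)$ pretends to be $n^{i\tau}$ in an $L^2$ sense, with quantitative error of the shape $\exp(c(\log\log r)^2)/(\log x)^{1/19}+(\log r)^{c}/(\log x)^{c'}$. The non-vanishing hypothesis on the $2$-adic Euler factor guarantees that $\theta_\tau(p)$ is well defined and uniformly bounded, and for real valued $g_3$ one may take $\tau=0$. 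This step contributes directly the last two error terms in the statement and fixes the reference frequency.

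Next, I would write $g_3(n)/n^{i\tau}=(1*\eta_\tau)(n)$ as a Dirichlet convolution, where $\eta_\tau$ is the multiplicative function whose local factors at each prime are governed by $\theta_\tau(p)$ and whose Dirichlet series converges absolutely thanks to the good function hypothesis. Substituting gives
\[
\sum_{n\le x}g_1(n+3)g_2(n+2)g_3(n+1)=\sum_{d}\eta_\tau(d)\sum_{\substack{n\le x\\ d\mid n+1}}g_1(n+3)g_2(n+2)(n+1)^{i\tau}.
\]
Splitting divisors at $r$, the tail $d>r$ is dispatched by Rankin's trick together with the good function hypothesis. For each smooth $d\le r$ the inner sum is a double correlation of $g_1,g_2$ in the progression $n\equiv-1\pmod d$ twisted by the Hecke factor $(n+1)^{i\tau}$. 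I would expand $g_j=1*h_j$ with $h_j=\mu*g_j$ for $j=1,2$ and apply the Stepanauskas/Klurman convolution-plus-sieve argument underlying Theorem 1 to extract the main Euler product $P''(r,x)$ on primes $p\in(r,x]$.

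Assembling the pieces, the contributions from primes $\le r$ and from $\eta_\tau$ collapse into the finite product $Q(r)$, while the Hal\'asz formula for $M_x(g_3)$ reintroduces the Euler factors on all $p\le x$ to yield the main term $M_x(g_3)\,P''(r,x)\,Q(r)$. The remaining error terms have their standard origins: $x^{1-2\alpha}\exp(cr^\alpha/\log r)$ comes from counting solutions of the congruence system $[d_1,d_2,d_3]\mid n+j$ with $d_j$ supported on primes $\le r$, and $\alpha\in(1/2,5/9)$ is precisely the window in which this count balances the Hal\'asz error; $(S''(r,x))^{1/2}$ is produced by a Cauchy--Schwarz over $|g_j(p)-1|^2$ for $p\in(r,x+4-j]$ and $j=1,2$; and $(r\log r)^{-1/2}$ arises from the usual truncation of the smooth component.

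The main obstacle, in my view, is the Hal\'asz step. Securing a single pretentious frequency $\tau$ that controls $g_3(n+1)$ uniformly as $n$ runs over $[1,x]$ \emph{and} recovering an error of quality $1/(\log x)^{c'}$ requires the Montgomery--Tenenbaum refinement in its fully quantitative form, and the restriction $|\tau|\le(\log x)^{1/19}$ is exactly what propagates into the final estimate. A secondary subtlety is verifying that the main term from the smooth-divisor double correlation matches the Euler product appearing inside $M_x(g_3)$ well enough that the residual object is the \emph{finite} product $Q(r)$; this requires a careful cancellation between the local factor at each prime $p\le r$ produced by the double correlation and the corresponding local factor of $M_x(g_3)$.
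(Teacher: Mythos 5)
There is a genuine gap, and it sits at the heart of your plan: the decomposition $g_3(n)n^{-i\tau}=(1*\eta_\tau)(n)$ with an absolutely convergent $\eta_\tau$-series is not available under the hypotheses of the theorem. The ``good function'' condition says only that $\sum_{p\le x}|g_3(p)-\kappa|\ll x/(\log x)^u$ for \emph{some} constant $\kappa$, which may be far from $1$: the Möbius function itself is a good function (take $\kappa=-1$) and satisfies the $2$-adic non-vanishing condition $(11)$, so it is a legitimate — indeed the motivating — choice of $g_3$. For $g_3=\mu$ one has $\eta_\tau(p)=\mu(p)p^{-i\tau}-1$, so $|\eta_\tau(p)|$ is of size $2$ and $\sum_p|\eta_\tau(p)|/p$ diverges; consequently the tail $\sum_{d>r}|\eta_\tau(d)|/d$ cannot be ``dispatched by Rankin's trick'' — it is not even finite — and the whole substitution collapses. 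The preliminary ``Halász step'' is equally unavailable: no Halász--Montgomery--Tenenbaum statement produces, for an arbitrary bounded multiplicative function, a frequency $\tau$ such that $g_3$ pretends to be $n^{i\tau}$ with error $1/(\log x)^{c'}$; such conclusions require the mean value to be large, which is not assumed (and would be circular here, since $M_x(g_3)$ is precisely the quantity kept unevaluated in the main term).

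The paper's proof goes the opposite way around and never expands $g_3$ at all. One splits $g_1,g_2$ into smooth and rough parts; the rough parts are handled by Warlimont's argument (giving $(S^{''}(r,x))^{1/2}+(r\log r)^{-1/2}$, as you correctly anticipated), while for the smooth parts one expands $g_{jr}=1*h_{jr}$ with $h_{jr}$ supported on $r$-smooth integers — so all relevant Dirichlet series converge — and is left with mean values of $g_3$ over progressions $n\equiv v \pmod{d_1d_2}$ with $d_1d_2\le x^{1/2}(\log x)^{-\beta}$. Two external inputs then do the work you tried to get from the convolution: Wolke's Bombieri--Vinogradov-type theorem for good functions (this is where the good function hypothesis is genuinely used, combined with a two-dimensional Cauchy--Schwarz) replaces the progression by the coprimality condition $(n,d_1d_2)=1$ with error $(\log r)^{c}/(\log x)^{c'}$; and Elliott's extrapolation theorem writes $\sum_{n\le x,\,(n,D)=1}g_3(n)=\theta_\tau(D)\sum_{n\le x}g_3(n)+O\bigl(x(\log\log 3D)^2/(\log x)^{1/19}\bigr)$, which is the sole source of $\tau$, of the factor $\theta_\tau$, and of the error $\exp(c(\log\log r)^2)/(\log x)^{1/19}$. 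Hypothesis $(11)$ enters exactly here, because the moduli $d_1d_2$ may be even. The resulting sum over $d_1,d_2$ of $h_{1r}(d_1)h_{2r}(d_2)\theta_\tau(d_1d_2)/\phi(d_1d_2)$ converges to $Q(r)$, so the main term $M_x(g_3)P^{''}(r,x)Q(r)$ emerges with $M_x(g_3)$ left intact as a factor — no asymptotic formula for it is ever needed, whereas your assembly step (``the Halász formula for $M_x(g_3)$ reintroduces the Euler factors'') presupposes one.
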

\begin{asump}[$2$-point Chowla type conjecture]
For every given $A>0$,
\[\sum\limits_{n\le x}\mu (n+2)\mu (n+1)\exp(2\pi in\alpha)=O\left(\frac{x}{(\log x)^A}\right)\]
holds uniformly for all real $\alpha.$ 
\end{asump}
\begin{thm}
Let $g_1$ be a multiplicative function such that $|g_1(n)|\le 1$ for all $n$ and  
\begin{eqnarray}
\sum\limits_{p}\frac{|g_1(p)-1|^2}{p}< \infty.
\end{eqnarray}
Suppose that Assumption $1$ holds, then as $x\rightarrow \infty$,
\[M_{x}^{'}(g_1, \mu, \mu)=\frac{1}{x}\sum_{n\le x}g_1(n+3)\mu(n+2)\mu (n+1)=o(1).\]
\end{thm}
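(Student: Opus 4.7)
My plan mirrors the strategy for Theorem 3, with Assumption 1 playing the role that a Davenport-style estimate for $\sum_n \mu(n)e(\alpha n)$ plays in the close-to-$1$ case. The idea is to strip off $g_1$ by its Eratosthenes transform, reduce the triple sum to shifted double-M\"obius sums in arithmetic progressions, and then detect each progression by an additive character so Assumption 1 applies directly. Concretely, write $g_1 = \mathbf{1} * h_1$, where $h_1 = g_1 * \mu$ is multiplicative with $h_1(p) = g_1(p)-1$; the hypothesis becomes $\sum_p |h_1(p)|^2/p < \infty$. Substituting $g_1(n+3) = \sum_{d \mid n+3} h_1(d)$ and inverting the order of summation,
\[\sum_{n\le x} g_1(n+3)\mu(n+1)\mu(n+2) \;=\; \sum_{d\le x+3} h_1(d)\,T(d,x), \qquad T(d,x) := \sum_{\substack{n\le x\\ d\mid n+3}} \mu(n+1)\mu(n+2).\]

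Next I would fix a slowly growing cutoff, say $D = (\log x)^B$, and split the $d$-sum at $D$. For the head range $d\le D$ I would expand the congruence $d\mid n+3$ into additive characters modulo $d$,
\[T(d,x) \;=\; \frac{1}{d}\sum_{a=0}^{d-1} e\!\left(\tfrac{3a}{d}\right)\sum_{n\le x}\mu(n+1)\mu(n+2)\,e\!\left(\tfrac{an}{d}\right),\]
and invoke Assumption 1 with a large parameter $A$ to bound each inner sum by $O(x/(\log x)^A)$ uniformly in $a$ and $d$. Combining this with the crude $|h_1(d)| \le 2^{\omega(d)} \le d^{o(1)}$, the head contribution is at most $O\!\bigl(x(\log x)^{B(1+o(1)) - A}\bigr)$, which is $o(x)$ once $A$ is taken large enough relative to $B$.

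The tail range $d > D$ is the main obstacle. Under the $L^2$ hypothesis one does \emph{not} obtain absolute convergence of $\sum_d |h_1(d)|/d$, so a naive triangle inequality is insufficient. I would handle this via Cauchy--Schwarz combined with a Shiu/Hal\'asz-type mean value estimate, namely $\sum_{n\le x}|h_1(n)|^2 \ll x$ (which follows from $\sum_p |h_1(p)|^2/p < \infty$), played against a second-moment bound for $T(d,x)$ obtained from orthogonality of additive characters and the sparseness of the support of $\mu(n+1)\mu(n+2)$. The delicate point is to tune $B$ (and hence $D$) so that both the head contribution controlled by Assumption 1 and the tail contribution controlled by the $L^2$ structure of $h_1$ are simultaneously $o(x)$; balancing these two savings is the heart of the argument.
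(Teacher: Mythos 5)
Your head-range treatment (detecting $d\mid n+3$ by additive characters and invoking Assumption 1 for $d\le D=(\log x)^{B}$) is exactly what the paper does for its sum $V_{11}$, and that part is fine. The genuine gap is the tail $d>D$, and it is not a matter of ``tuning $B$'': it cannot be closed with the tools you propose. Since Assumption 1 only saves powers of $\log x$, it beats the trivial bound $|T(d,x)|\le x/d+1$ only for $d\le(\log x)^{O(1)}$, so $D$ is forced to stay a power of $\log x$. In the range $(\log x)^{B}<d\le x^{1/2}$ there is no provable nontrivial bound on $T(d,x)$, with or without Assumption 1, so the only admissible second-moment bound is the trivial one, $\sum_{D<d\le x}|T(d,x)|^{2}\ll x^{2}/D+x$; note also that ``sparseness'' gives nothing, since $\mu^{2}(n+1)\mu^{2}(n+2)$ has positive density, and for $d\in(x/2,x]$ one has $T(d,x)=\mu(d-2)\mu(d-1)$, so the weight-one second moment is unconditionally $\gg x$. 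Cauchy--Schwarz against your $L^{2}$ bound $\sum_{d\le x}|h_{1}(d)|^{2}\ll x$ then yields at best
\[
\Bigl(\sum_{d>D}|h_{1}(d)|^{2}\Bigr)^{1/2}\Bigl(\sum_{D<d\le x}|T(d,x)|^{2}\Bigr)^{1/2}\ll x^{1/2}\cdot\bigl(x^{2}/D\bigr)^{1/2}=x^{3/2}D^{-1/2},
\]
which with $D=(\log x)^{B}$ is $x^{3/2}(\log x)^{-B/2}$, nowhere near $o(x)$. Even granting (unprovable) square-root cancellation $|T(d,x)|\approx(x/d)^{1/2}$ for all $d$, the second moment would be $\asymp x\log x$ and Cauchy--Schwarz would give only $O(x)$, still not $o(x)$. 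The failure is structural: under hypothesis (12) alone, $\sum_{d}|h_{1}(d)|/d$ may diverge, so the full Eratosthenes transform $h_{1}=g_{1}*\mu$ puts too much mass on large moduli, exactly where no cancellation in $T(d,x)$ is available.

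The paper avoids this by never expanding all of $g_{1}$. It first factors $g_{1}=g_{1r}\cdot g_{1r}^{*}$ at a very small prime cutoff $r=(\log\log x)^{1/\alpha}$, and only the small-prime part $g_{1r}$ is written as $1*h_{1r}$. Since $h_{1r}$ vanishes off $r$-smooth integers, Rankin's trick applies:
\[
\sum_{d>y}\frac{|h_{1r}(d)|}{d}\le y^{-\alpha}\prod_{p\le r}\Bigl(1+\sum_{m\ge 1}\frac{|h_{1r}(p^{m})|}{p^{m(1-\alpha)}}\Bigr)\le y^{-\alpha}\exp\Bigl(c\,\frac{r^{\alpha}}{\log r}\Bigr),
\]
which with $y=\log x$ is genuinely $o(1)$; this is what kills the tail (the paper's $V_{12}$), and it is impossible for the untruncated $h_{1}$. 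The leftover large-prime part $g_{1r}^{*}$ is then compared in $L^{1}$ to the constant $T(r,x)$ by the Warlimont/Tur\'an--Kubilius argument (the paper's $V_{2}$, identical to the $T_{2}$ estimate in Theorem 3), and this is precisely where hypothesis (12) enters; no information about $\mu$ is needed in that step. So the missing idea is the small-prime/large-prime factorization \emph{before} applying the Eratosthenes transform: with it, your head estimate goes through verbatim and the tail difficulty disappears.
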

\section{\textbf{Applications}}
In this section we state several applications of our results.\\
Application $1$ is a linear version of a theorem of Kat\'{a}i (\cite{KAT},Theorem $1$).
\begin{appli} Let $g_1, g_2, g_3$ be multiplicative function such that for all $j=1,2,3,$ $|g_j|\le 1$ and $g_1,g_2,g_3$ are close to $1.$
Then as $x\rightarrow \infty$ we have,
\[M_{x}^{'}\left(g_1, g_2,g_3\right) = \prod\limits_{p}w_p+o(1),\]
where $w_p$ is defined in $(7).$
\end{appli}
Application $2$ is a polynomial version with degree of polynomial greater than or equal to $2$ of a theorem of Kat\'{a}i(\cite{KAT},Theorem $5$).
\begin{appli}
Let $F_j(n)(j=1,2,3)$ be as above of degree $v_j\ge 2$.Let $g_j(j=1,2,3)$ be as above and
\begin{align}
\sum\limits_{p}\frac{(g_j(p)-1)\varrho_j(p)}{p}<\infty.
\end{align}
Suppose that as $p\rightarrow \infty,$
\begin{align}
\left(g_j(p^\alpha)-1\right)\varrho_j(p^\alpha)\rightarrow 0,
\end{align} 
for $\alpha=1,$ when $v_j\ge 2$ and for $\alpha=1,2,\cdots,v_j-2,$ when $v_j\ge 3,$ then we have as $x\rightarrow\infty,$
\[M_x^{''}(g_1,g_2,g_3)\rightarrow P_1(\gamma)P_2(\gamma),\]
where $P_1(\gamma)$ is defined by $(8)$ and 
\begin{align}
P_2(\gamma):=\prod\limits_{p>\gamma}\biggl( 1+\sum\limits
_{m=1}^{\infty}\sum\limits_{i=1}^{3}\frac{h_i(p^m)\varrho_i(p^m)}{p^m}\biggr).
\end{align}
\end{appli}
Applications $3,4,5$ and $6$ are the direct apllications of the Theorems $1,3,5$ and $2$ respectively.
\begin{appli} 
Let $\phi(n)=n\prod\limits_{p|n}\left(1-\frac{1}{p}\right)$, be Euler's totient function and $\sigma_{a} (n)=\sum\limits_{d|n}d^a, a>0$. Then for $x\ge 2,0<A<1$ we have,
\begin{align*}
&\frac{1}{x}\sum\limits_{n\le x}\frac{\sigma_{a} (n+3)\sigma_{a} (n+2)\sigma_{a} (n+1)}{(n+3)^{a}(n+2)^{a}(n+1)^{a}}=w_2\prod\limits_{p>2}\left\lbrace 1+\frac{3}{p^{a+1}-1}\right\rbrace + O\left(\frac{1}{(\log x)^A}\right),\\
&\frac{1}{x}\sum\limits_{n\le x}\frac{\phi(n+3)\phi(n+2)\phi(n+1)}{(n+3)(n+2)(n+1)}=w_2\prod\limits_{p>2}\left\lbrace1-\frac{3}{p^2}\right\rbrace + O\left(\frac{1}{(\log x)^A}\right),
\end{align*}
where $w_2$ is defined by $(7)$ at $p=2$ in which $g_j,j=1,2,3$ are replaced by $\sigma_a$ and $\phi$ respectively to the above sums.
\end{appli}
\begin{appli}
If $\phi, \mu$ and $\sigma_{a},a>0$ are as above then as $x\rightarrow \infty$,
\begin{align*}
&\frac{1}{x}\sum_{n\le x}\frac{\phi(n+3)}{(n+3)}\frac{\phi(n+2)}{(n+2)}\mu (n+1)=o(1),\\ 
&\frac{1}{x}\sum_{n\le x}\frac{\sigma_{a}(n+3)}{(n+3)^a}\frac{\sigma_{a}(n+2)}{(n+2)^a}\mu (n+1)=o(1).
\end{align*} 
\end{appli}
\begin{appli}
If $\phi, \mu$ and $\sigma_{a},a>0$ are as above and under Assumption $1$, we have as $x\rightarrow\infty,$
\begin{align*}
&\frac{1}{x}\sum_{n\le x}\frac{\phi(n+3)}{(n+3)}\mu(n+2)\mu (n+1)=o(1),\\
&\frac{1}{x}\sum_{n\le x}\frac{\sigma_{a}(n+3)}{(n+3)^a}\mu(n+2)\mu (n+1)=o(1).
\end{align*}
 \end{appli}
 \begin{appli}
 Let $F_1(x)=x^2+b,F_2(x)=x^2+c,F_3(x)=x^2+d, a>0,0<t<1,$
 where $b,c,d$ are taken such that $F_j(x),j=1,2,3$ satisfies the assumption of Theorem $2$ and quadratic residue for all odd prime $p.$ Then there exist a natural number $\gamma$ such that for all $x\ge \gamma,$
 \begin{align*}
 &\frac{1}{x}\sum\limits_{n\le x}\frac{\sigma_a(n^2+b)\sigma_a(n^2+c)\sigma_a(n^2+d)}{\left(n^2+b\right)^a\left(n^2+c\right)^a\left(n^2+d\right)^a}=P_1^{'}(\gamma)\prod\limits_{p
 >\gamma}\left(1+\frac{6}{p^{a+1}-1}\right)+O\left(\frac{1}{(\log x)^t}\right)\\
 &\frac{1}{x}\sum\limits_{n\le x}\frac{\phi(n^2+b)\phi(n^2+c)\phi(n^2+d)}{(n^2+b)(n^2+c)(n^2+d)}=P_1^{''}(\gamma)\prod\limits_{p>\gamma}\left(1-\frac{6}{p^2}\right)+O\left(\frac{1}{(\log x)^t}\right),
 \end{align*}
 where $P_1^{'}(\gamma)$ and $P_2^{''}(\gamma)$ are defined by $(8)$ in which $g_j,j=1,2,3$ are replaced by $\sigma_a$ and $\phi$ respectively.
 \end{appli}
 Applications $7$ and $8$ are the behaviour of the distribution of the sum $(2).$
 \begin{appli}
 Let $f_1, f_2$ and $f_3$ be real-valued additive functions and 
\begin{align}
 &\sum\limits_{|f_j(p)|\le 1}\frac{f_{j}^2 (p)}{p}<\infty, j=1,2,3,\\
 &\sum\limits_{|f_j(p)|> 1}\frac{1}{p}<\infty, j=1,2,3,\\
 &\sum\limits_{j=1}^{3}\sum\limits_{|f_j(p)|\le 1 }\frac{f_{j}(p)}{p}<\infty,
\end{align}
Then the distribution functions
 \begin{eqnarray}
 \frac{1}{[x]}\# \left\lbrace n|n\le x, f_1(n+3)+f_2(n+2)+f_3(n+1)\le z\right\rbrace
 \end{eqnarray}
converge weakly towards a limit distribution $($\cite{TEN}, Chapter III.$2)$ as $x\rightarrow \infty,$ and the characteristic function of this limit distribution is equal to  
 \begin{eqnarray}
 w_2\prod\limits_{p>2}\biggl(1-\frac{3}{p}+\left(1-\frac{1}{p}\right)\sum\limits_{m=1}^{\infty}\sum\limits_{k=1}^{3}
 \frac{\exp\left(itf_k(p^m)\right)}{p^m}\biggr),
 \end{eqnarray}
 where $w_2$ is defined by $(7)$ at $p=2$ with $g_k$ is replaced by $\exp(itf_k),k=1,2,3.$
 \end{appli}
 \begin{appli}
 Let $f_1, f_2$ and $f_3$ be real-valued additive functions and $F_j(n),j=1,2,3$ are as above of degree $v_j\ge 2$. Assume that
\begin{align}
 &\sum\limits_{|f_j(p)|\le 1}\frac{f_{j}^2 (p)}{p}\varrho_j(p)<\infty, j=1,2,3,\\
 &\sum\limits_{|f_j(p)|> 1}\frac{\varrho_j(p)}{p}<\infty, j=1,2,3,\\
 &\sum\limits_{j=1}^{3}\sum\limits_{|f_j(p)|\le 1}\frac{f_{j}(p)\varrho_j(p)}{p}<\infty,\\
 &f_j(p^m)\varrho_j(p^m)\rightarrow 0,
 \end{align} 
 for $m=1,$ when $v_j\ge 2$ and for $m=1,2,\cdots,v_j-2,$ when $v_j\ge 3.$
Then the distribution functions
 \begin{eqnarray}
 \frac{1}{[x]}\# \left\lbrace n|n\le x, f_1(F_1(n))+f_2(F_2(n))+f_3(F_3(n))\le z\right\rbrace
 \end{eqnarray}
converge weakly towards a limit distribution as $x\rightarrow \infty,$ and the characteristic function of this limit distribution is equal to  $P_1(\gamma)P_2(\gamma),$
 where $P_1(\gamma)$ and $P_2(\gamma)$ are defined by $(8)$ and $(15)$ respectively with $g_j$ is replaced by $\exp(itf_j),j=1,2,3.$
 \end{appli}
 
\section{\textbf{Proof of Theorem $\bf 2$}} 
We begin with some  lemmas. The first lemma is required for the polynomial version of classical Tur\'{a}n-Kubilius inequality.
\begin{lem}[\cite{ERD}, Lemma 3]
Let, $F(m)$ be arbitrary primitive polynomial of degree $v$ with integer coefficients and with discriminant $D.$ Let, $D\neq 0.$ Then the number of solution of the congruence $F(m)\equiv 0 \pmod{p^{\alpha}}$ is $\varrho(p),$ when $p\not | D,$ and smaller than $vD^2$ when $p|D.$

Further, $\varrho(ab)=\varrho(a)\varrho(b),$ if $(a, b)=1.$\\
and $\varrho(p^{\alpha})\le c,$ $c$ depends only on $F.$
\end{lem}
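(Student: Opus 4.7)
The plan is to establish the three assertions of the lemma in turn: the multiplicativity of $\varrho$, the identity $\varrho(p^\alpha)=\varrho(p)$ when $p \nmid D$, and the bound $\varrho(p^\alpha)<vD^2$ when $p \mid D$, and then combine them for the uniform estimate.

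Multiplicativity is a direct application of the Chinese Remainder Theorem. When $\gcd(a,b)=1$, residues modulo $ab$ correspond bijectively to pairs of residues modulo $a$ and modulo $b$, and under this bijection the condition $F(m)\equiv 0\pmod{ab}$ splits into the simultaneous conditions $F(m)\equiv 0\pmod{a}$ and $F(m)\equiv 0\pmod{b}$. Counting solutions on both sides gives $\varrho(ab)=\varrho(a)\varrho(b)$.

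For $p \nmid D$, the key algebraic fact is that the discriminant $D$ agrees (up to a factor depending only on the leading coefficient of $F$) with the resultant of $F$ and its derivative $F'$. Hence $p \nmid D$ forces $F$ and $F'$ to share no common root modulo $p$, so each root $m_0$ of $F \pmod p$ satisfies $F'(m_0)\not\equiv 0\pmod p$. Standard Hensel's lemma then lifts each such simple root uniquely to a root modulo $p^\alpha$ for every $\alpha\ge 1$, producing a bijection between roots modulo $p$ and roots modulo $p^\alpha$, which yields $\varrho(p^\alpha)=\varrho(p)$.

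The main obstacle is the case $p\mid D$, where Hensel's lemma fails in its naive form. I would invoke the quantitative form of Hensel's lemma: if $m_0$ is a solution modulo $p^{\beta}$ with $\beta>2v_p(F'(m_0))$, then its lifts to higher powers are controlled by $v_p(F'(m_0))$; moreover the depth at which roots of $F$ can remain multiple modulo $p^\beta$ is bounded in terms of $v_p(D)$, because persistent multiplicity forces a corresponding contribution to the discriminant. Starting from the at most $v$ distinct roots of $F$ mod $p$ and iterating the lifting step, the process stabilises once $\beta$ exceeds the depth of multiplicity governed by $v_p(D)$, and a direct count through these at most $v_p(D)$ levels of potential branching yields the crude uniform bound $\varrho(p^\alpha)<vD^2$ independent of $\alpha$.

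Finally, combining the two cases: for $p \nmid D$ one has $\varrho(p^\alpha)\le v$, while for the finitely many $p\mid D$ one has $\varrho(p^\alpha)<vD^2$. Taking $c=vD^2$ gives $\varrho(p^\alpha)\le c$ uniformly in $p$ and $\alpha$, with $c$ depending only on $F$. The substantive technical content lies in the quantitative Hensel analysis for the bad primes dividing $D$; the remaining pieces are the Chinese Remainder Theorem and the standard form of Hensel's lemma.
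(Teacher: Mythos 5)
The paper does not prove this lemma at all: it is quoted from Erd\H{o}s [\cite{ERD}, Lemma 3] and used as a black box, so there is no internal proof to compare yours against; the comparison below is with the standard classical argument that the citation points to. Your first two steps are that standard argument and are essentially sound: multiplicativity is exactly the Chinese Remainder Theorem, and for $p\not | D$ the simple-root-plus-Hensel mechanism is correct. One caveat: your justification of simplicity via the resultant has a hole when $p$ divides the leading coefficient $a_v$ of $F$, because $\mathrm{Res}(F,F')=\pm a_v D$, so $p$ divides the resultant automatically in that case and nothing about common roots mod $p$ follows from it; the correct statement is that $p\not | D$ forces the homogenized binary form of $F$ to be squarefree mod $p$, which again yields that every affine root is simple, so the conclusion survives even though your stated reason does not cover that case.

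The genuine gap is in the case $p\mid D$, which is the entire substance of the lemma. The sentence ``persistent multiplicity forces a corresponding contribution to the discriminant'' is precisely the assertion that needs proof, and the counting you build on it (``at most $v_p(D)$ levels of potential branching'') is not the right mechanism: a solution $m$ mod $p^{\beta}$ with $p\mid F'(m)$ either dies or branches into $p$ children at every single level, so branching is not a priori confined to boundedly many levels; what is actually bounded is the width of the solution clusters. The standard way to close this is the Bezout identity coming from the Sylvester matrix: there exist $A,B\in\mathbb{Z}[x]$ with $A F+B F'=\mathrm{Res}(F,F')=:R\neq 0$. Hence if $p^{\alpha}\mid F(m)$ and $\alpha>\rho:=v_p(R)$, then necessarily $v_p(F'(m))\le\rho$. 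For $\alpha>2\rho$ the strong form of Hensel's lemma then attaches to every solution mod $p^{\alpha}$ a root $\theta\in\mathbb{Z}_p$ of $F$ with $m\equiv\theta \pmod {p^{\alpha-\rho}}$; since $F$ has at most $v$ roots in $\mathbb{Z}_p$, this gives $\varrho(p^{\alpha})\le v\,p^{\rho}$, while trivially $\varrho(p^{\alpha})\le p^{\alpha}\le p^{2\rho}$ for $\alpha\le 2\rho$, so $\varrho(p^{\alpha})\le v\,p^{2\rho}$ uniformly in $\alpha$. This supplies the final assertion $\varrho(p^{\alpha})\le c$ with $c$ depending only on $F$, which is all the paper ever uses; note, though, that it yields the constant $v\,(a_vD)^2$ rather than the bound $vD^2$ you claim (they coincide for monic $F$), so even after repair your sketch does not quite reproduce Erd\H{o}s's stated constant.
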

Now we prove a polynomial version of classical Tur\'{a}n-Kubilius inequality.
\begin{lem}
 Let, $f(p^m)$ be the sequence of complex numbers for all $p\in \wp$, $m\ge 1$ and $F(n)$ is a polynomial as above of degree $v.$ Then we have
  \[\sum\limits_{n\le x}\left|K(F(n))-A(x)\right|\ll xB(F(x))+\sum\limits_{m=1}^{v-1}\sum\limits_{p^m\le F(x)}\left|f(p^m)\right|\varrho(p^m)+\frac{x}{\log x},\]
where
\begin{align*}
& K(n):=\sum\limits_{p^m\parallel n}f(p^m), \quad A(x):=\sum\limits_{p^m\le x}\frac{f(p^m)\varrho (p^m)}{p^m},\\
& B^2(x):=\sum\limits_{p^m\le x}\frac{|f(p^m)|^2\varrho(p^m)}{p^m}.
\end{align*}
\begin{proof}
We write
$K(F(n))=\sum\limits_{p^m\parallel F(n)}f(p^m)=g(F(n))+h(F(n)),$\\
where
\begin{align*}
g(y)=\sum\limits_{\substack{p^m\parallel y\\p^m\le y}}f(p^m)\quad \text{and} \quad h(y)=\sum\limits_{\substack{p^m\parallel y\\p^m>y}}f(p^m).
\end{align*}
Now,
\[\sum\limits_{n\le x}|K(F(n))-A(x)|\le \sum\limits_{n\le x}\left|g(F(n))-A(x^{1/2})\right|+\sum\limits_{n\le x}\left|h(F(n))\right|+\sum\limits_{n\le x}\left|A(x^{1/2})-A(x)\right|.\]
From Tur\'{a}n-Kubilius inequality(\cite{ELL},Lemma $4.11$), we have
\[\sum\limits_{n\le x}\bigl|g(F(n))-A(x^{1/2})\bigr|\ll xB(x^{1/2}).\]
From Lemma $1$ and Cauchy-Schwarz inequality, we have
\begin{align*}
&\left|A(x)-A(x^{1/2})\right|\le \sum\limits_{x^{1/2}<p^m\le x}\frac{\left|f(p^m)\right|\varrho(p^m)}{p^m}\\
&\le \biggl(\sum\limits_{x^{1/2}<p^m\le x}\frac{\left|f(p^m)\right|^2\varrho(p^m)}{p^m}\biggr)^{1/2}\biggl(\sum\limits_{x^{1/2}<p^m\le x}\frac{\varrho(p^m)}{p^m}\biggr)^{1/2}=O(B(x)).
\end{align*}
Again by Cauchy-Schwarz inequality, we have
\begin{align*}
&\sum\limits_{n\le x}|h(F(n))|=\sum\limits_{n\le x}\biggl|\sum\limits_{\substack{p^m\parallel F(n)\\p^m>x^{1/2}}}f(p^m)\biggr|\\
& \ll \sum\limits_{x^{1/2}<p^m\le F(x)}\frac{x|f(p^m)|\varrho(p^m)}{p^m}+\sum\limits_{x^{1/2}<p^m\le F(x)}\left|f(p^m)\right|\varrho(p^m)\\
 &\ll x\biggl(\sum\limits_{x^{1/2}<p^m\le F(x)}\frac{|f(p^m)|^2\varrho(p^m)}{p^m}\biggr)^{1/2}\biggl(\sum\limits_{x^{1/2}<p^m\le F(x)}\frac{\varrho(p^m)}{p^m}\biggr)^{1/2}\\
 &+\sum\limits_{x^{1/2}<p^m\le F(x)}\left|f(p^m)\right|\varrho(p^m)
\ll xB(F(x))+\sum\limits_{m=1}^{v-1}\sum\limits_{p^m\le F(x)}\left|f(p^m)\right|\varrho(p^m)+\frac{x}{\log x}.
\end{align*}
Which proves the required Lemma.
\end{proof}
\end{lem}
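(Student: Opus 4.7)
The plan is to follow the standard Turán--Kubilius strategy precomposed with a polynomial, splitting the prime-power support of $K$ at the threshold $x^{1/2}$. Write
\[g(y)=\sum_{\substack{p^m\parallel y\\ p^m\le x^{1/2}}}f(p^m),\qquad h(y)=\sum_{\substack{p^m\parallel y\\ p^m>x^{1/2}}}f(p^m),\]
so that $K=g+h$. By the triangle inequality, the sum to be estimated is dominated by
\[\sum_{n\le x}\bigl|g(F(n))-A(x^{1/2})\bigr|+\sum_{n\le x}\bigl|h(F(n))\bigr|+x\bigl|A(x)-A(x^{1/2})\bigr|.\]

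For the first summand, $g\circ F$ is exactly the object controlled by the polynomial Turán--Kubilius inequality (Elliott \cite{ELL}, Lemma $4.11$), which yields $xB(x^{1/2})$ at once. The third summand is handled by Cauchy--Schwarz over the dyadic range $x^{1/2}<p^m\le x$, using Lemma $1$ to bound $\sum \varrho(p^m)/p^m=O(1)$ uniformly on that range, giving $O(xB(x))$. Both are absorbed into $xB(F(x))$.

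The substantive step is bounding the tail $\sum_{n\le x}|h(F(n))|$. The plan is to interchange summations so that for each $p^m$ with $x^{1/2}<p^m\le F(x)$, the number of $n\le x$ with $p^m\mid F(n)$ is at most $x\varrho(p^m)/p^m+\varrho(p^m)$. Applying Cauchy--Schwarz to the main term $x\sum|f(p^m)|\varrho(p^m)/p^m$ converts it to $xB(F(x))$ (using again $\sum \varrho(p^m)/p^m=O(1)$ on this range, by Mertens together with Lemma $1$). The residual $\sum|f(p^m)|\varrho(p^m)$ is decomposed by the exponent: for $1\le m\le v-1$ it is precisely the explicit term appearing in the lemma, while for $m\ge v$ the constraint $p^m\le F(x)$ forces $p\le F(x)^{1/v}\ll x$, so this contribution is controlled by $x/\log x$ via the prime number theorem combined with the uniform bound $\varrho(p^m)\ll_F 1$ from Lemma $1$.

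The main obstacle will be the tail $p^m>x^{1/2}$: one has to treat the ``moderate'' range, where $x\varrho(p^m)/p^m$ dominates the count of $n\le x$ with $p^m\mid F(n)$, separately from the ``very large'' range, where $\varrho(p^m)$ dominates. In the latter the uniform bound $\varrho(p^m)\ll_F 1$ from Lemma $1$ must be used together with Mertens-type estimates to avoid a spurious $\log\log$ loss. The $x/\log x$ term in the announced bound should arise precisely from this very large prime-power tail, reflecting the sparsity of those $n\le x$ for which $F(n)$ admits an unusually large prime-power divisor.
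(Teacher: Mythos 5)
Your proposal follows essentially the same route as the paper's own proof: the same decomposition of $K\circ F$ at the threshold $x^{1/2}$, the same appeal to Elliott's polynomial Tur\'an--Kubilius inequality (Lemma $4.11$), Cauchy--Schwarz combined with $\sum \varrho(p^m)/p^m=O(1)$ both for $|A(x)-A(x^{1/2})|$ and for the main part of the tail, and the same splitting of the residual sum $\sum|f(p^m)|\varrho(p^m)$ into exponents $m\le v-1$ (the explicit term of the lemma) and $m\ge v$ (absorbed into $x/\log x$ via $\varrho(p^m)\ll_F 1$ and prime counting). Your write-up in fact states cleanly the threshold $x^{1/2}$ that the paper's definitions of $g$ and $h$ garble (writing $p^m\le y$ and $p^m>y$), so nothing further is needed.
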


The following lemma ensures the existance of $\gamma$ in Theorem $2.$
\begin{lem}[\cite{TAN}, Lemma 2.1]
If $F_1(m)$ and $F_2(m)$ are relatively prime polynomials with integer coefficients, then the congruence $F_1(m)\equiv 0 \pmod{a}, F_2(m)\equiv 0 \pmod{a}$ have common roots atmost for finitely many values of $a.$ 
\end{lem}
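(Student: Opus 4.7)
The plan is to reduce the problem to a fixed divisibility condition via the resultant of $F_1$ and $F_2$. Since $F_1,F_2\in\mathbb{Z}[m]$ are coprime in $\mathbb{Z}[m]$ (equivalently, by Gauss's lemma, in $\mathbb{Q}[m]$), the resultant $R:=\mathrm{Res}(F_1,F_2)$ is a non-zero integer. The Sylvester matrix construction (or equivalently, running the Euclidean algorithm in $\mathbb{Q}[m]$ and clearing denominators) yields polynomials $u(m),v(m)\in\mathbb{Z}[m]$ with
\[
u(m)\,F_1(m)+v(m)\,F_2(m)=R.
\]

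Now suppose some modulus $a$ admits a common root of the two congruences, i.e.\ there exists $m_0$ with $F_1(m_0)\equiv F_2(m_0)\equiv 0\pmod a$. Substituting $m=m_0$ into the identity above gives
\[
R\;=\;u(m_0)F_1(m_0)+v(m_0)F_2(m_0)\;\equiv\;0\pmod a,
\]
so $a\mid R$. Since $R$ is a fixed non-zero integer depending only on $F_1$ and $F_2$, the set of such $a$ is contained in the finite set of divisors of $R$, and the lemma follows.

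The only non-routine point is the construction of the integer Bezout identity $uF_1+vF_2=R$ with $R\neq 0$. A direct Euclidean algorithm over $\mathbb{Q}[m]$ produces only rational coefficients, so one has to argue that denominators can be cleared in a controlled way; the cleanest route is to invoke the resultant via the Sylvester determinant, which simultaneously furnishes $u,v\in\mathbb{Z}[m]$ explicitly and shows $R\neq 0$ precisely because $F_1,F_2$ share no root in $\overline{\mathbb{Q}}$. Once this identity is in hand, the rest of the proof is a one-line substitution, and this is where I expect the bulk of the (standard) work to sit.
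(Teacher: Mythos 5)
Your proof is correct, but note that the paper does not actually prove this lemma: it is quoted from Tanaka (\cite{TAN}, Lemma 2.1) and used as a black box, its only role being to guarantee the existence of the threshold $\gamma$ in Theorem 2. So there is no internal argument to compare against; what you have done is supply the standard proof of the cited fact, and it is complete. The chain of reasoning is sound: coprimality of $F_1,F_2$ forces the resultant $R=\mathrm{Res}(F_1,F_2)$ to be a nonzero integer; the Sylvester-matrix (Cramer's rule) construction yields $u,v\in\mathbb{Z}[m]$ with $u(m)F_1(m)+v(m)F_2(m)=R$ identically; and any modulus $a$ admitting a simultaneous root $m_0$ of the two congruences must then satisfy $a\mid R$, so only the finitely many divisors of $R$ can occur. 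One small inaccuracy worth fixing: your parenthetical claim that coprimality in $\mathbb{Z}[m]$ is \emph{equivalent}, by Gauss's lemma, to coprimality in $\mathbb{Q}[m]$ is not right as stated --- for instance $2m$ and $2m+2$ are coprime in $\mathbb{Q}[m]$ but share the factor $2$ in $\mathbb{Z}[m]$. However, your argument only needs the one implication that does hold (a common nonconstant factor over $\mathbb{Q}$ can be scaled to a primitive common factor over $\mathbb{Z}$, so coprimality over $\mathbb{Z}[m]$ gives no common root in $\overline{\mathbb{Q}}$, hence $R\neq 0$), so the proof stands.
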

Define multiplicative functions $g_{jr}$ and $g_{jr}^{*}$, $j=1,2,3$ by\\
\begin{align*}
g_{jr}(p^m)=
\begin{cases}
g_j(p^m) \ & \text{if } p\le r \\
1 \ & \text{if } p>r.
\end{cases} \quad \text{and} \quad
g_{jr}^{*}(p^m)=
\begin{cases}
1 \ & \text{if } p\le r \\
g_j(p^m) \ & \text{if } p>r.
\end{cases}
\end{align*}
and multiplicative function $h_{jr}$, $j=1,2,3$ by 
\begin{equation*}
h_{jr}(p^m)=
\begin{cases}
g_j(p^m)-g_j(p^{m-1}) \ & \text{if } p\le r \\
0 \ & \text{if } p>r.
\end{cases}
\end{equation*}
so that, $g_{jr}=1\ast h_{jr}$, $j=1,2,3.$
\subsection{Proof of Theorem $2$}
We can write
\begin{align*}
M_{x}^{''}(g_{1}, g_{2}, g_3)-P^{'}(x)=P^{'}(r,x)\biggl(\frac{1}{x}\sum_{n\le x}g_{1r}(F_1(n))g_{2r}(F_2(n))g_{3r}(F_3(n))-P^{'}(r)\biggr)\\ 
+\frac{1}{x}\sum_{n\le x}g_{1r}(F_1(n))g_{2r}(F_2(n))g_{3r}(F_3(n))\left(g_{1r}^{\ast}(F_1(n))g_{2r}^{\ast}(F_3(n))g_{3r}^{\ast}(F_3(n))-P^{'}(r,x)\right).
\end{align*}
So,
\begin{align*}
&\left|M_x^{''}(g_1,g_2,g_3)-P^{'}(x)\right|\le \frac{1}{x}\biggl|\sum\limits_{n\le x}g_{1r}(F_1(n))g_{2r}(F_2(n))g_{3r}(F_3(n))-P^{'}(r)\biggr|\\
&+\frac{1}{x}\sum\limits_{n\le x}\left|g_{1r}^{\ast}(F_1(n))g_{2r}^{\ast}(F_2(n))g_{3r}^{\ast}(F_3(n))-P^{'}(r,x)\right|:=S_1+S_2. 
\end{align*}
\subsection*{Estimation of $S_1$}
\begin{align*}
&\frac{1}{x}\sum\limits_{n\le x}g_{1r}(F_1(n))g_{2r}(F_2(n))g_{3r}(F_3(n))=\frac{1}{x}\sum\limits_{\substack{d_j\le F_j(x)\\j=1,2,3}}h_{1r}(d_1)h_{2r}(d_2)h_{3r}(d_3)\sum\limits_{\substack{n\le x\\d_j|F_j(x)\\j=1,2,3}}1\\
&= \frac{1}{x}\sum\limits_{d_1\le F_1(x)}\sum\limits_{d_2\le F_2(x)}\sum\limits_{d_3\le F_3(x)}h_{1r}(d_1)h_{2r}(d_2)h_{3r}(d_3)\frac{x}{[d_1,d_2,d_3]}\varrho(d_1,d_2,d_3)\\
&+O\Biggl(\frac{1}{x}\sum\limits_{\substack{d_i\le F_j(x)\\j=1,2,3}}h_{1r}(d_1)h_{2r}(d_2)h_{3r}(d_3)\varrho(d_1,d_2,d_3)\Biggr) :=P_1^{'}+S_3.
\end{align*}
Now we observe that
\[ \sum\limits_{d_j=1}^{\infty}\frac{|h_{jr}(d_j)|\varrho_j(d_j)}{d_j}\le \exp\biggl(c_{F_j} \sum\limits_{p\le r}\frac{1}{p}\biggr)\ll (\log r)^{c_{F_j}}\]
and for $0<\alpha <1$ 
\[\sum\limits_{d_j =1}^{\infty}\frac{|h_{jr}(d_j)|\varrho_j(d_j)}{d_j^{1-\alpha}}\le \prod\limits_{p\le r}\left(1+\sum\limits_{m=1}^{\infty}\frac{|h_{jr}(p^m)|\varrho_j(p^m)}{p^{m(1-\alpha)}}\right)\le \exp\left(c_{F_j}\frac{r^\alpha}{\log r}\right).\]
We can say that,
\begin{eqnarray*}
S_3&\ll & \frac{1}{x}\sum\limits_{\substack{d_j\le F_j(x)\\j=1,2,3}}|h_{1r}(d_1)h_{2r}(d_2)h_{3r}(d_3)|\varrho(d_1)\varrho(d_2)\varrho(d_3)\\
&\ll &\frac{1}{x}\left(F_1(x)F_2(x)F_3(x)\right)^{1-\alpha}\sum\limits_{\substack{d_j=1\\j=1,2,3
}}^{\infty}\frac{|h_{1r}(d_1)|}{d_1^{1-\alpha}}\frac{|h_{2r}(d_2)|}{d_2^{1-\alpha}}\frac{|h_{3r}(d_3)|}{d_3^{1-\alpha}}\varrho(d_1)\varrho(d_2)\varrho(d_3)\\
&\ll &\frac{1}{x}\left(F_1(x)F_2(x)F_3(x)\right)^{1-\alpha}\exp(c_F\frac{r^\alpha}{\log r}).
\end{eqnarray*}
Now,
\begin{eqnarray*}
P_1^{'}&=&P^{'}(r)+O\Biggl(\sum\limits_{k=1}^{3}\sum\limits_{\substack{d_j=1\\j=1,2,3\\
d_k>F_k(x)}}^{\infty}\frac{|h_{1r}(d_1)h_{2r}(d_2)h_{3r}(d_3)|}{[d_1,d_2,d_3]}\varrho(d_1,d_2,d_3)\Biggr)\\
&:=&P^{'}(r)+S_4.
\end{eqnarray*}
Again from the above observations, we have
\begin{eqnarray*}
 S_4&\ll &\sum\limits_{k=1}^{3}\sum\limits_{\substack{d_j=1\\j=1,2,3\\d_k>F_k(x)}}^
 {\infty}\frac{|h_{1r}(d_1)h_{2r}(d_2)h_{3r}(d_3)|}{d_1d_2d_3}\varrho(d_1)\varrho(d_2)\varrho(d_3)\\
&\ll&\left(F_1(x)^{-\alpha}+F_2(x)^{-\alpha}+F_3(x)^{-\alpha}\right)
\exp\left(c_F\frac{r^\alpha}{\log r}\right).
\end{eqnarray*}
\subsection*{Estimation of $S_2$}
Let
\begin{eqnarray*}
N_r^{'}= \left\lbrace n\le x|\quad \exists k\in \left\lbrace1,2,3 \right\rbrace \mbox{ and } \exists p>r \mbox{ such that } p^m\|F_k(n), |1-g_k(p^m)|>\frac{1}{2}\right\rbrace.
\end{eqnarray*}
Decompose $S_2$ into two sums
\begin{eqnarray*}
S_2&=& \frac{1}{x}\sum_{n\in N_r^{'}}\left|g_{1r}^{*}(F_1(n))g_{2r}^{*}(F_2(n))g_{3r}^{*}(F_3(n))-P^{'}(r,x)\right|\\
&+&\frac{1}{x}\sum_{n\notin N_r^{'}}\left|g_{1r}^{*}(F_1(n))g_{2r}^{*}(F_2(n))g_{3r}^{*}(F_3(n))-P^{'}(r,x)\right|\\
&:=& S_5+S_6.
\end{eqnarray*}
Let us put,
\[\eta_j(p):=\sum\limits_{m=1}^{\infty}\frac{\left(g_j(p^m))-g_j(p^{m-1})\right)\varrho_j(p^m)}{p^m}, j=1,2,3.\]
From Lemma $1$, we have
\[|\eta_j(p)|\le 2c_{F_j}\frac{1}{p-1}\le \frac{1}{6} \text{ if } p\ge 1+12c_{F_j}:=p_j.\]
Let, $p_4:=\max (p_1,p_2,p_3).$
So, if $r\ge p_4,$ then
\begin{eqnarray*}
P^{'}(r,x)&=&\prod\limits_{r<p\le x}\left(1+\eta_1(p)+\eta_2(p)+\eta_3(p)\right)\\
&=&\exp\biggl(\sum\limits_{r<p\le x}\left(\eta_1(p)+\eta_2(p)+\eta_3(p)+O\left(|\eta_1(p)|^2+|\eta_2(p)|^2+|\eta_3(p)|^2\right)\right)\biggr)\\
&=&\exp\biggl(\sum\limits_{r<p\le x}\sum\limits_{j=1}^{3}\frac{(g_j(p)-1)\varrho_j(p)}{p}+O\biggl(\sum\limits_{r<p\le x}\frac{1}{p^2}\biggr)\biggr)\ll 1.
\end{eqnarray*}
Without loss of generality we may assume that, $r\ge p_4.$\\
Now
\begin{eqnarray*}
S_5&\ll &\frac{1}{x}\sum\limits_{n\in N_r^{'}}1\ll \frac{1}{x}\sum\limits_{\substack{p^m\le F_j(x)\\|1-g_j(p^m)|>1/2\\p>r}}\left(\frac{x\varrho_j(p^m)}{p^m}+\varrho_j(p^m)\right)\\
&\ll &\frac{1}{x}\sum\limits_{\substack{p^m\le F_j(x)\\|1-g_j(x)|>1/2\\p>r}}\frac{x\varrho_j(p^m)}{p^m}+\frac{1}{x}\sum\limits_{\substack{p^m\le F_j(x)\\|1-g_j(p^m)|>1/2\\p>r}}\varrho_j(p^m)\\
&\ll &\sum\limits_{r<p\le F_j(x)}\frac{|1-g_j(p)|\varrho_j(p)}{p}+\sum\limits_{p>r}\frac{1}{p^2}+\frac{1}{x}\sum\limits_
{\substack{p^m\le F_j(x)\\p>r,m<v_j}}|1-g_j(p^m)|\varrho_j(p^m)+\frac{1}{\log x}\\
&\ll & S^{'}(r,x)+T(x)+(r\log r)^{-1}+\frac{1}{x}C(r)+\frac{1}{\log x}.
\end{eqnarray*}
Since we know that if $\Re(u)\le 0, \Re(v)\le 0$, then
\begin{align}
&\left|\exp(u)-\exp(v)\right|\le |u-v|\quad \text{and} \quad \text{for} \quad |z|\le 1, |\arg(z)|\le \frac{\pi}{2}\\
&\log (1+z)=z+O(|z|^2). 
\end{align}
we have,
\begin{eqnarray*}
S_6&\ll &\frac{1}{x}\sum\limits_{n\le x}\sum\limits_{j=1}^{3}\Biggl|\sum\limits_{\substack{p^m\parallel F_j(n)\\p>r}}(g_j(p^m)-1)-\sum\limits_{\substack{p^m\le x\\p>r}}\frac{g_j(p^m)-1}{p^m}\varrho_j(p^m)\Biggr|\\
&+&\frac{1}{x}\sum\limits_{n\le x}\Biggl|\sum\limits_{\substack{p^m\le x\\p>r}}\sum\limits_{j=1}^{3}\frac{(g_j(p^m)-1)\varrho_j(p^m)}{p^m}-\log P^{'}(r,x)\Biggr|\\
&+&O\Biggl(\frac{1}{x}\sum\limits_{n\le x}\sum\limits_{j=1}^{3}\sum\limits_{\substack{p^m\parallel F_j(x)\\p>r}}\left|g_j(p^m)-1\right|^2\Biggr):=S_{61}+S_{62}+S_{63}.
\end{eqnarray*}
From Lemma $2,$ we have
\begin{eqnarray*}
S_{61}&\ll &\sum\limits_{j=1}^{3}\Biggl(\sum\limits_{\substack{p^m\le F_j(x)\\p>r}}\frac{\left|g_j(p^m)-1\right|^2\varrho_j(p^m)}{p^m}\Biggr)^{1/2}+\frac{1}{x}C(r)+\frac{1}{\log x}\\
&\ll &(S^{'}(r,x))^{1/2}+(T(x))^{1/2}+(r\log r)^{-1/2}+\frac{1}{x}C(r)+\frac{1}{\log x}. 
\end{eqnarray*}
\begin{eqnarray*}
S_{62}&=&\biggl|\sum\limits_{j=1}^{3}\sum\limits_{r<p\le x}\frac{(g_j(p)-1)\varrho_j(p)}{p}+O\biggl(\sum\limits_{r<p}\frac{1}{p^2}\biggr)-\sum\limits_{r<p\le x}\log w_{p}^{'}\biggr|\\
&=&\biggl|\sum\limits_{j=1}^{3}\sum\limits_{r<p\le x}\frac{(g_j(p)-1)\varrho_j(p)}{p}+O\biggl(\sum\limits_{r<p}\frac{1}{p^2}\biggr)-\sum\limits_{r<p\le x}\sum\limits_{j=1}^{3}\frac{\left(g_j(p)-1\right)\varrho_j(p)}{p}\biggr|\\
&\ll &\sum\limits_{p>r}\frac{1}{p^2}\ll (r\log r)^{-1}.
\end{eqnarray*}
\begin{eqnarray*}
S_{63}&\ll &\sum\limits_{j=1}^{3}\sum\limits_{\substack{p^m\le F_j(x)\\p>r}}\frac{\left|g_j(p^m)-1\right|^2\varrho_j(p^m)}{p^m}+\frac{1}{x}C(r)+\frac{1}{\log x}\\
&\ll &S^{'}(r,x)+T(x)+(r\log r)^{-1}+\frac{1}{x}C(r)+\frac{1}{\log x}.
\end{eqnarray*}
Combining all these estimates for all $1-\frac{1}{v_1+v_2+v_3}<\alpha <1,$ we have
\begin{eqnarray*}
M_x^{''}(g_1,g_2,g_3)-P^{'}(x)&\ll & \left(F_1(x)F_2(x)F_3(x)\right)^{1-\alpha}\exp \left(c\frac{r^\alpha}{\log r}\right)+(S^{'}(r,x))^{1/2}\\
&+&(T(x))^{1/2}+(r\log r)^{-1/2}+\frac{1}{x}C(r)+\frac{1}{\log x}.
\end{eqnarray*}
which proves the required Theorem.
\section{\textbf{Proof of Theorem $\bf{3}$ and $\bf{4}$}}

We begin with some lemmas.The first lemma will be used to prove Theorem $3.$
\begin{lem}[\cite{DAV},Theorem $1$]
For any given $K>0$,
\[\sum\limits_{n\le x}\mu(n)\exp(2\pi in\theta)=O\left(\frac{x}{(\log x)^K}\right),\]
uniformly in $\theta.$
\end{lem}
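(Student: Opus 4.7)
The plan is to follow Davenport's original argument and bound the exponential sum $S(x,\theta) = \sum_{n \leq x} \mu(n) \exp(2\pi i n \theta)$ uniformly in $\theta$ via the classical Hardy--Littlewood major/minor arc dichotomy. Apply Dirichlet's approximation theorem with parameter $Q = x (\log x)^{-A}$ (the exponent $A = A(K)$ to be chosen later): every $\theta$ admits a representation $\theta = a/q + \beta$ with $(a,q) = 1$, $1 \leq q \leq Q$ and $|\beta| \leq 1/(qQ)$. Partition the range of $\theta$ into the \emph{major arcs} $q \leq P := (\log x)^B$ and the \emph{minor arcs} $q > P$, where $B = B(K)$ will be fixed below.

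On the major arcs, detect the residue class modulo $q$ to rewrite
\begin{equation*}
S(x,\theta) = \sum_{b = 1}^{q} \exp(2\pi i ab/q) \sum_{\substack{n \leq x \\ n \equiv b \pmod q}} \mu(n) \exp(2\pi i n \beta).
\end{equation*}
Removing the smooth factor $\exp(2\pi i n \beta)$ by partial summation reduces matters to bounding $M(y;q,b) = \sum_{n \leq y,\, n \equiv b \pmod q} \mu(n)$. The Siegel--Walfisz theorem gives $M(y;q,b) \ll y \exp(-c\sqrt{\log y})$ uniformly for $q \leq (\log y)^B$, and summing trivially over $b$ yields savings better than any prescribed power of $\log x$.

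On the minor arcs, apply Vaughan's identity to decompose $\mu$ into pieces that give rise to Type~I sums $\sum_m a_m \sum_{n \leq x/m} \exp(2\pi i mn \theta)$ and Type~II sums $\sum_m \sum_n a_m b_n \exp(2\pi i mn \theta)$ with bounded coefficients. Combined with the standard Farey-style estimate $\sum_{k \leq N} \min(x/k, \|k\theta\|^{-1}) \ll (x/q + x + q) \log(qx)$ and Cauchy--Schwarz on the Type~II pieces, one obtains the textbook bound
\begin{equation*}
S(x,\theta) \ll \bigl(x q^{-1/2} + x^{4/5} + (xq)^{1/2}\bigr)(\log x)^{4}.
\end{equation*}
Choosing $B = 2K + 10$ (and $A$ correspondingly large) makes the right side $O(x (\log x)^{-K})$ throughout the minor-arc regime $P < q \leq Q$. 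The principal technical point is balancing the two regimes with a single choice of parameters $A, B$ so that the Siegel--Walfisz error on the major arcs and the Vaughan-identity error on the minor arcs simultaneously achieve the prescribed $(\log x)^{-K}$ decay; this is routine once exponents are tracked, but the uniformity in $\theta$ is essential and is what is actually delivered by the fact that every $\theta$ falls into exactly one of the two regimes under the Dirichlet dissection.
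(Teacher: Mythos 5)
The paper does not prove this lemma at all: it is imported verbatim as a citation to Davenport's 1937 theorem (reference [DAV], Theorem 1), and the surrounding text immediately moves on to the lemmas needed for Theorem 4. So your proposal cannot match "the paper's proof" — instead it reconstructs the proof behind the citation, and it does so correctly in outline. Your argument is the standard modern rendering of Davenport's result: a Dirichlet dissection into major arcs $q\le (\log x)^B$, handled by Siegel--Walfisz for $\mu$ in progressions (after partial summation, which costs only the harmless factor $1+|\beta|x\ll(\log x)^A$), and minor arcs $(\log x)^B<q\le x(\log x)^{-A}$, handled by a bilinear (Type I/Type II) decomposition giving the bound $\bigl(xq^{-1/2}+x^{4/5}+(xq)^{1/2}\bigr)(\log x)^{4}$; your parameter bookkeeping ($B=2K+10$, $A$ comparably large) does close the argument. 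The only historical difference is that Davenport, writing in 1937, used Vinogradov's method to produce the bilinear decomposition, whereas you invoke Vaughan's identity, which is the streamlined 1977 substitute; the analytic skeleton (Siegel--Walfisz on major arcs, bilinear cancellation on minor arcs) is the same. Two remarks worth keeping in mind: the reliance on Siegel's theorem makes the implied constant ineffective, which is harmless here since the paper uses the lemma only qualitatively (with $K\ge 5$ in the estimate of $T_{11}$); and the uniformity in $\theta$ that you correctly emphasize is precisely what the paper needs, since the lemma is applied to additive characters $\exp(2\pi i nl/(d_1d_2))$ whose denominators vary over the whole range of the outer summation.
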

The next lemmas will be used to prove Theorem $4.$
\begin{lem}[\cite{ELL2},Theorem $2$]
Let $g$ be a multiplicative function whose modulus does not exceed $1.$ Then there is a real $\tau,$ $|\tau|\le (\log x)^{1/19},$ such that
\begin{align}
\sum\limits_{\substack{n\le x\\(n,D)=1}}g(n)=\theta_{\tau}(D)\sum\limits_{n\le x}g(n)+O\left(\frac{x(\log\log 3d)^2}{(\log x)^{1/19}}\right)
\end{align}
holds uniformly for $x\ge 2$ and odd integers $D.$ If, in addition, the condition $(11)$ is satisfied then $(31)$ holds for even integers as well. For real-valued $g$ we may set $\tau =0.$
\end{lem}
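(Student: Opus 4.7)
The plan is to reduce the restricted sum $S(x,D):=\sum_{n\le x,\,(n,D)=1}g(n)$ to the unrestricted mean $T(x):=\sum_{n\le x}g(n)$ by exploiting multiplicativity of $g$ at primes dividing $D$, and then to invoke a quantitative Hal\'{a}sz-type theorem to control $T(y)$ in nearby ranges. Every $n\le x$ factors uniquely as $n=ab$ with $a$ supported on primes of $D$ and $(b,D)=1$, which yields the identity
\[T(x)=\sum_{\substack{a\ge 1\\ p\mid a\,\Rightarrow\,p\mid D}}g(a)\,S(x/a,D).\]
I would then M\"obius-invert this convolution prime-by-prime at each $p\mid D$, writing $S(x,D)$ as a linear combination of $T(x/a)$ whose weights are the coefficients of $\bigl(\sum_{m\ge 0}g(p^m)t^m\bigr)^{-1}$ at each $p\mid D$. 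Formally resumming these coefficients, after inserting the correct shift $1+i\tau$, reproduces the factor $\theta_\tau(D)$ from $(10)$.

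Next comes the analytic input. I would approximate $T(x/a)\approx a^{-(1+i\tau)}T(x)$ uniformly for $a$ not too large. Here Hal\'{a}sz's mean-value theorem, in its quantitative (Elliott) form, supplies a ``pretentious'' parameter $\tau$ with $|\tau|\le(\log x)^{1/19}$ nearly minimizing $\sum_{p\le x}(1-\Re g(p)p^{-i\tau})/p$, and shows that $y^{-1-i\tau}T(y)$ varies slowly on $[x^{1/2},x]$ with error $\ll(\log x)^{-1/19}$. The specific exponent $1/19$ comes from optimizing several contour and smoothing parameters inside the Hal\'{a}sz method; producing it with an explicit power-of-log saving (rather than a mere $o(1)$) is the technical core of the proof. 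Substituting this approximation into the M\"obius-inverted identity and resumming the $a$-series yields the main term $\theta_\tau(D)\,T(x)$.

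Finally, I would do the bookkeeping for the error. The factor $(\log\log 3D)^2$ arises from tail contributions in the $a$-sum when $a$ runs over many primes of $D$: each such prime contributes a tail controlled by a divisor-type sum, and Cauchy--Schwarz over $p\mid D$ produces two factors of $\log\log D$. The restriction to odd $D$ in the unconditional statement is due solely to the prime $2$: the Euler factor $1+\sum_{m\ge 1}g(2^m)/2^{m(1+i\tau)}$ can a priori vanish, making $\theta_\tau$ blow up there, whereas hypothesis $(11)$ bounds it away from zero uniformly in $\tau$, so the same argument goes through for even $D$. The main obstacle throughout is the quantitative Hal\'{a}sz input with a power-of-log saving \emph{uniform in $\tau$ and in $D$}; once that is in hand, the rest is a sieve-theoretic manipulation driven by the multiplicativity of $g$.
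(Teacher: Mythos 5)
You cannot be checked against the paper here, because the paper contains no proof of this statement: it is Lemma 5, quoted verbatim with the citation (\cite{ELL2}, Theorem 2) and used as a black box in the proof of Theorem 4. So your proposal can only be compared with Elliott's original argument.

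Measured against that, your outline follows the genuine route. Elliott first proves an extrapolation theorem (Theorem 1 of \cite{ELL2}): there is a real $\tau$, $|\tau|\le(\log x)^{1/19}$, such that $y^{-1-i\tau}\sum_{n\le y}g(n)$ is nearly constant over a long range of $y$, with a power-of-$\log$ saving; Theorem 2 is then deduced by exactly the convolution and prime-by-prime inversion bookkeeping you describe, the factor $\theta_\tau(D)$ arising from resumming the inverse Euler factors against $a^{-(1+i\tau)}$, and the factor $(\log\log 3D)^2$ from the tails of the $a$-sum over integers composed of primes dividing $D$. Your diagnosis of the parity restriction is also correct: for odd $p$ the Euler factor satisfies $\bigl|1+\sum_{m\ge1}g(p^m)p^{-m(1+i\tau)}\bigr|\ge 1-\frac{1}{p-1}\ge\frac{1}{2}$ automatically, whereas at $p=2$ it can vanish, which is exactly what condition $(11)$ excludes. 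The one genuine gap is that the entire analytic content --- the existence of $\tau$ and the quantitative extrapolation with the $(\log x)^{-1/19}$ saving, uniform in the range --- is invoked rather than proved; you correctly flag it as the technical core, but give no indication of the Hal\'asz--Parseval mechanism on the associated Dirichlet series that produces it. As a reconstruction of Elliott's strategy your plan is sound but incomplete precisely where the difficulty lies; for the purposes of this paper, citing the result as the author does is the appropriate course.
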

The following lemma is a special case of a t(heorem of Wolke \cite{WOL}.
\begin{lem}[\cite{WOL},Theorem $1$]
Let $g$ is as above and $g$ is a good function. Then for given any $A>0$ there is a corresponding $A_1>0,$ possibly depending on $g,$ such that for $x\ge 2$ and $Q=x^{1/2}(\log x)^{-A_1},$ we have
\[\sum\limits_{d\le Q}\max_{(l,d)=1}\max_{u\le x}\Biggl|\sum\limits_{\substack{n\le u\\n\equiv l\pmod d}}g(n)-\frac{1}{\phi(d)}\sum\limits_{\substack{n\le u\\(n,d)=1}}g(n)\Biggr|\ll \frac{x}{(\log x)^A}\]
In case $-\tau \in \mathbb{N}$ or $\tau =0$ then
\[\sum\limits_{d\le Q}\max_{l}\max_{u\le x}\Biggl|\sum\limits_{\substack{n\le u\\n\equiv l\pmod d}}g(n)\Biggr|\ll \frac{x}{(\log x)^A}.\]
\end{lem}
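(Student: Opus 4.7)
The plan is to establish this Bombieri--Vinogradov type estimate for good multiplicative functions by adapting the classical proof for the von Mangoldt function, which is essentially Wolke's approach. I would first apply orthogonality of Dirichlet characters modulo $d$ to convert the inner discrepancy into character sums: for $(l,d)=1$,
\[
\sum_{\substack{n\le u\\ n\equiv l\,(d)}}g(n)-\frac{1}{\phi(d)}\sum_{\substack{n\le u\\ (n,d)=1}}g(n)=\frac{1}{\phi(d)}\sum_{\chi\ne\chi_0\,(d)}\bar\chi(l)\,T(u,\chi),\qquad T(u,\chi):=\sum_{n\le u}g(n)\chi(n),
\]
which reduces the problem to controlling $\sum_{d\le Q}\frac{1}{\phi(d)}\sum_{\chi\ne\chi_0}\max_{u\le x}|T(u,\chi)|$.

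Next I would split the modulus range at a threshold $d_0=(\log x)^B$. For small moduli $d\le d_0$ (equivalently, for characters of bounded conductor), I would establish a Siegel--Walfisz-type pointwise bound $T(u,\chi)\ll x(\log x)^{-C}$ for arbitrary $C>0$. Here the good function hypothesis is essential: it forces the Dirichlet series $\sum g(n)n^{-s}$ to factor as $\zeta(s)^\kappa H(s)$ with $H$ holomorphic in a slightly larger half-plane, so the twisted series $\sum g(n)\chi(n)n^{-s}$ factors as $L(s,\chi)^\kappa H_\chi(s)$. A Perron-type contour shift together with the standard Vinogradov--Korobov zero-free region for $L(s,\chi)$, or alternatively a Landau--Selberg--Delange argument, yields the required pointwise estimate, with Siegel's (ineffective) theorem absorbing exceptional real zeros.

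For the larger range $d_0<d\le Q$, I would combine the pointwise bound with the multiplicative large sieve inequality
\[
\sum_{d\le Q}\frac{d}{\phi(d)}\sum_{\chi\,(d)}^{*}|T(u,\chi)|^{2}\ll (x+Q^{2})\sum_{n\le x}|g(n)|^{2}\ll (x+Q^{2})x,
\]
in the Bombieri--Vinogradov template: Cauchy--Schwarz distributes a square-root of the $L^\infty$ bound against a square-root of the large-sieve $L^2$ bound, and the choice $Q=x^{1/2}(\log x)^{-A_1}$ converts a savings of $(\log x)^{-C}$ at the level of individual characters into a savings of $(\log x)^{-A}$ after averaging, provided $A_1$ is chosen large enough depending on $A$. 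Inserting the maximum over $u$ is handled by the standard trick of partial summation (or passing to Perron's explicit formula before taking absolute values).

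The hardest step will be making the Siegel--Walfisz estimate for $T(u,\chi)$ genuinely uniform: unlike the case $g=\Lambda$ where analytic properties of $L(s,\chi)$ are classical, here one must control how the secondary factor $H_\chi(s)$ depends on the conductor, and one must handle the parameter $\kappa$, including the delicate case where $\kappa$ is zero or a negative integer. The second displayed inequality of the lemma, which drops the coprimality restriction $(l,d)=1$, requires in addition that for $-\tau\in\mathbb{N}$ or $\tau=0$ the contribution of $n$ sharing prime factors with $d$ is itself amenable to a Siegel--Walfisz estimate; this follows because in these distinguished values of $\tau$ the local factors $\theta_\tau(p)$ reduce the problem, via a short sieve on the primes dividing $d$, to the coprime case already treated.
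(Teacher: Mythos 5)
First, a point of comparison: the paper contains no proof of this statement at all --- Lemma 7 is quoted verbatim as a special case of Theorem 1 of \cite{WOL}, so your proposal must be judged as an attempt to reprove Wolke's theorem from scratch. As such, it has a genuine gap, and it is not located where you say the hardest step is.

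The orthogonality reduction and the Siegel--Walfisz treatment of small moduli $d\le(\log x)^{B}$ are fine in outline (although your claim that the good-function hypothesis makes $H_\chi(s)$ holomorphic ``in a slightly larger half-plane'' overstates it: the condition $\sum_{p\le x}|g(p)-\kappa|\ll x(\log x)^{-u}$ only controls sums like $\sum_p|g(p)-\kappa|p^{-1}$, i.e.\ gives quantitative information on the line $\Re s=1$, not analytic continuation to the left). The fatal step is the range $(\log x)^{B}<d\le Q$. Applying the multiplicative large sieve directly to the undecomposed linear sums $T(u,\chi)=\sum_{n\le u}g(n)\chi(n)$ and then Cauchy--Schwarz cannot save any power of $\log x$: writing $\frac{1}{\phi(d)}=\bigl(\frac{1}{d\phi(d)}\bigr)^{1/2}\bigl(\frac{d}{\phi(d)}\bigr)^{1/2}$, Cauchy--Schwarz gives
\[
\sum_{d\le Q}\frac{1}{\phi(d)}\sum_{\chi}^{*}\max_{u\le x}|T(u,\chi)|
\le \Bigl(\sum_{d\le Q}\frac{1}{d}\Bigr)^{1/2}
\Bigl(\sum_{d\le Q}\frac{d}{\phi(d)}\sum_{\chi}^{*}\max_{u\le x}|T(u,\chi)|^{2}\Bigr)^{1/2}
\ll x(\log x)^{O(1)},
\]
because the large-sieve bound $(x+Q^{2})\sum_{n\le x}|g(n)|^{2}\ll x^{2}$ is dominated by its diagonal term $x\cdot x$. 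The large sieve alone cannot exclude that individual sums $|T(u,\chi)|$ have size comparable to $x$, and for conductors near $x^{1/2}$ there is no pointwise bound (Siegel--Walfisz, Hal\'asz, or otherwise) to rule this out; so the method stalls at size $x$, nowhere near $x(\log x)^{-A}$. This is precisely why every proof of a Bombieri--Vinogradov-type theorem injects structural input \emph{before} the large sieve is used: for $\Lambda$, Vaughan's identity producing bilinear Type I/II sums; in Bombieri's original argument (and in Wolke's 1973 setting, which predates Vaughan's identity), zero-density estimates for $L$-functions. For an arbitrary multiplicative $g$ with $|g|\le 1$ no such combinatorial identity exists, and the good-function hypothesis must be used exactly here: one writes $g=\tau_\kappa\ast h$, where $\tau_\kappa$ has twisted Dirichlet series $L(s,\chi)^{\kappa}$ amenable to the classical analytic machinery (contour integration, zero-density estimates, Siegel's theorem), and the hypothesis makes the remainder $h$ small in the relevant averaged sense. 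This decomposition --- the actual heart of Wolke's theorem --- is absent from your sketch; the Siegel--Walfisz uniformity you flag as the main difficulty is, by contrast, the routine part once conductors are restricted to powers of $\log x$.
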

The following lemma is a two dimensional version of standard Cauchy-Schwarz inequality:
\begin{lem}
If $x_j,x_k$ and $c_{jk}$ are non-negetive real numbers, then
\[\sum\limits_{j\le y}\sum\limits_{k\le y}x_jx_kc_{jk}\le \biggl(\sum\limits_{j\le y}\sum\limits_{k\le y}x_j^2x_k^2c_{jk}\biggr)^{1/2}\biggl(\sum\limits_{j\le y}\sum\limits_{k\le y}c_{jk}\biggr)^{1/2}.\]
\end{lem}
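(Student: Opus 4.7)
The plan is to reduce this to the ordinary (single-index) Cauchy--Schwarz inequality by treating the pair $(j,k)$ as a single index running over the set $\{1,\dots,y\}^2$. The key algebraic observation is the factorization
\[
x_j x_k c_{jk} \;=\; \bigl(x_j x_k\,c_{jk}^{1/2}\bigr)\cdot c_{jk}^{1/2},
\]
which is legitimate because all quantities are non-negative, so the square roots are real. This writes the left-hand side as an inner product of two sequences indexed by $(j,k)$.

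Next I would apply the standard Cauchy--Schwarz inequality
\[
\biggl(\sum_{(j,k)} a_{jk} b_{jk}\biggr)^{2} \le \biggl(\sum_{(j,k)} a_{jk}^{2}\biggr)\biggl(\sum_{(j,k)} b_{jk}^{2}\biggr)
\]
with $a_{jk}=x_j x_k c_{jk}^{1/2}$ and $b_{jk}=c_{jk}^{1/2}$, where the sums range over $1\le j,k\le y$. This yields
\[
\biggl(\sum_{j\le y}\sum_{k\le y} x_j x_k c_{jk}\biggr)^{2} \le \biggl(\sum_{j\le y}\sum_{k\le y} x_j^{2}x_k^{2} c_{jk}\biggr)\biggl(\sum_{j\le y}\sum_{k\le y} c_{jk}\biggr).
\]
Finally, since both factors on the right are non-negative (indeed each $c_{jk}\ge 0$ and each $x_j\ge 0$), I can take square roots of both sides to obtain the stated inequality; the left-hand side is also non-negative, so taking the positive square root is unambiguous.

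There is no genuine obstacle here: the entire content of the lemma is the correct choice of factorization. One minor point worth being explicit about is that the non-negativity hypothesis on $x_j,x_k,c_{jk}$ is used twice, once to define the square roots $c_{jk}^{1/2}$ as real numbers and once to justify passing from the squared inequality to the unsquared inequality without worrying about signs. No deeper structure of the indexing set is needed; the argument works verbatim with $j\le y$, $k\le y$ replaced by any finite index set.
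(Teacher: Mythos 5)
Your proof is correct, but it is organized differently from the paper's. You flatten the double sum into a single sum over the product index set $\{1,\dots,y\}^{2}$ and apply the ordinary Cauchy--Schwarz inequality exactly once, using the factorization $x_jx_kc_{jk}=\bigl(x_jx_kc_{jk}^{1/2}\bigr)\cdot c_{jk}^{1/2}$. The paper instead iterates the one-dimensional inequality: first over $k$ with $j$ fixed, which produces the intermediate bound
\[
\sum_{j\le y}\sum_{k\le y}x_jx_kc_{jk}\le \sum_{j\le y}a_jb_j,
\qquad
a_j=\biggl(\sum_{k\le y}x_j^2x_k^2c_{jk}\biggr)^{1/2},
\quad
b_j=\biggl(\sum_{k\le y}c_{jk}\biggr)^{1/2},
\]
and then a second time over $j$ applied to the products $a_jb_j$. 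Both arguments rest on the same splitting of $c_{jk}$ into two square roots, and both use non-negativity in the same two places you identify, so neither is deeper than the other. Your single-application version is shorter, dispenses with the auxiliary sequences $a_j,b_j$, and makes clear that nothing about the product structure of the index set is used --- the inequality holds verbatim for sums over any finite set of pairs. The paper's two-step version illustrates the iterated pattern that is genuinely needed for nested or mixed-norm inequalities in which the two indices play asymmetric roles, but for this symmetric statement that extra scaffolding buys nothing.
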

\begin{proof}
By applying standard Cauchy-Schwarz over $k,$ we have
\[\sum\limits_{j\le y}\sum\limits_{k\le y}x_jx_kc_{jk}\le \sum\limits_{j\le y}\biggl(\sum\limits_{k\le y}x_j^2x_k^2c_{jk}\biggr)^{1/2}\biggl(\sum\limits_{k\le y}c_{jk}\biggr)^{1/2}:=\sum\limits_{j\le y}a_jb_j.\]
Again by applying cauchy-Schwarz over $j,$ we have
\[\sum\limits_{j\le y}a_jb_j\le \biggl(\sum\limits_{j\le y}a_j^2\biggr)^{1/2}\biggl(\sum\limits_{j\le y}b_j^2\biggr)^{1/2}=\biggl(\sum\limits_{j\le y}\sum\limits_{k\le y}x_j^2x_k^2c_{jk}\biggr)^{1/2}\biggl(\sum\limits_{j\le y}\sum\limits_{k\le y}c_{jk}\biggr)^{1/2}.\]
\end{proof}
\subsection{\textbf{Proof of Theorem $\bf{3}$}}
Let us put,
\[R(r,x)=\prod\limits_{r<p\le x}\biggl\lbrace 1-\frac{2}{p}+\left(1-\frac{1}{p}\right)\sum\limits_{m=1}^{\infty}\frac{g_1(p^m)+g_2(p^m)}{p^m}\biggr\rbrace.\]\\
It is easy to see that, $|R(r,x)|\le 1.$ Therefore,
\begin{eqnarray*}
M_{x}^{'}(g_{1}, g_{2}, \mu)=R(r,x)\frac{1}{x}\sum_{n\le x}g_{1r}(n+3)g_{2r}(n+2)\mu(n+1)\\ 
+\frac{1}{x}\sum_{n\le x}g_{1r}(n+3)g_{2r}(n+2)\mu(n+1)\left(g_{1r}^{\ast}(n+3)g_{2r}^{\ast}(n+2)-R(r,x)\right).
\end{eqnarray*}
So,
\begin{eqnarray*}
x\left|M_x^{'}(g_1,g_2,\mu)\right| &\le & \biggl|\sum\limits_{n\le x}g_{1r}(n+3)g_{2r}(n+2)\mu(n+1)\biggr|+\sum\limits_{n\le x}\biggl|g_{1r}^{*}(n+3)g_{2r}^{*}(n+2)-R(r,x)\biggr|\\
&:=&T_1+T_2.
\end{eqnarray*}
\subsection*{Estimation of $T_1$}
\begin{align*}
&\sum\limits_{n\le x}g_{1r}(n+3)g_{2r}(n+2)\mu(n+1)= \sum\limits_{n\le x}\sum\limits_{d_1|n+3}\sum\limits_{d_2|n+2}h_{1r}(d_1)h_{2r}(d_2)\mu(n+1)\\
&=\sum\limits_{d_1\le x+3}\sum\limits_{d_2\le x+2}h_{1r}(d_1)h_{2r}(d_2)\sum\limits_{\substack{2\le n\le x+1\\ d_1|n+2\\d_2|n+1}}\mu(n)
=\sum\limits_{d_1\le x+3}\sum\limits_{\substack{d_2\le x+2\\(d_1,d_2)=1}}h_{1r}(d_2)h_{2r}(d_2)\sum\limits_{\substack{2\le n\le x+1\\ n\equiv v(d_1 d_2)}}\mu(n)\\
&=\sum\limits_{d_1\le y}h_{1r}(d_1)\sum\limits_{\substack{d_2\le y\\ (d_1,d_2)=1}}h_{2r}(d_2)\sum\limits_{\substack{2\le n\le x+1\\ n\equiv v(d_1 d_2)}}\mu(n)
+\sum\limits_{k=1}^{2}\sum\limits_{\substack{d_j\le x+4-j\\ j=1,2\\
d_k>y\\(d_1,d_2)=1}}h_{1r}(d_1)h_{2r}(d_2)\sum\limits_{\substack{2\le n\le x+1\\n\equiv v(d_1 d_2)}}\mu(n),
\end{align*}
where $v$ is unique solution of the system of linear congruence $n\equiv -2(d_1), n\equiv -1(d_2), 0\le v \le d_1d_2-1$ and $y:=\log x.$ \\
So,
\begin{eqnarray*}
T_1&\ll & \sum\limits_{\substack{d_j\le y\\j=1,2}}|h_{1r}(d_1)h_{2r}(d_2)|\Biggl|\sum\limits_{\substack{n\le x+1\\n\equiv v(d_1d_2)}}\mu(n)\Biggr|+\sum\limits_{k=1}^{2}\sum\limits_{\substack{d_j\le x+4-j\\j=1,2\\d_k>y}}|h_{1r}(d_1)h_{2r}(d_2)|\left(\frac{x}{d_1d_2}+1\right)\\
&:=&T_{11}+T_{12}.
\end{eqnarray*}
From Lemma $4$ we have,
\begin{align*}
&\sum\limits_{\substack{n\le x+1\\n\equiv v(d_1d_2)}}\mu(n)=\sum\limits_{n\le x+1}\mu(n)\frac{1}{d_1d_2}\sum\limits_{l=1}^{d_1d_2}\exp\left(\frac{(n-v)l}{d_1d_2}\right)\\
&=\frac{1}{d_1d_2}\sum\limits_{l=1}^{d_1d_2}\exp\left(\frac{-vl}{d_1d_2}\right)\sum\limits_{n\le x+1}\mu(n)\exp\left(\frac{nl}{d_1d_2}\right)\ll \frac{x}{(\log x)^K}.
\end{align*}
So,
\begin{eqnarray*}
T_{11}&\ll &\frac{x}{(\log x)^K}\sum\limits_{d_1\le y}\sum\limits_{d_2\le y}|h_{1r}(d_1)h_{2r}(d_2)|\ll \frac{x}{(\log x)^K}y^4 \sum\limits_{d_1=1}^{\infty}\sum\limits_{d_2=1}^{\infty}\frac{|h_{1r}(d_1)h_{2r}(d_2)|}{d_1^2 d_2^2}\\
&\ll &\frac{x}{\log x}, \text{if } K\ge 5.
\end{eqnarray*}
Now from the following two estimations
\begin{eqnarray} 
\sum\limits_{d_j=1}^{\infty}\frac{|h_{jr}(d_j)|}{d_{j}^{\alpha}}&=& \prod\limits_{p\le r}\left(1+\sum\limits_{m=1}^{\infty}\frac{|h_{jr}(p^m)|}{p^{m\alpha}}\right)
\le \prod\limits_{p\le r}\left(1+\frac{2}{p^\alpha -1}\right)\\
&\le &\exp \biggl(c_1 \sum\limits_{p\le r}\frac{1}{p^\alpha}\biggr)\le \exp \left(c_2\frac{r^{1-\alpha}}{\log r}\right)\nonumber. 
\end{eqnarray}
and
\begin{align}
 \sum\limits_{d_j=1}^{\infty}\frac{|h_{jr}(d_j)|}{d_j}\le \exp\biggl(c_3 \sum\limits_{p\le r}\frac{1}{p}\biggr)\ll (\log r)^{c_4}
 \end{align} 
 We have,
\begin{eqnarray*}
T_{12}&\ll & x\sum\limits_{\substack{d_j\le x+4-j\\j=1,2\\d_k>y}}\frac{|h_{1r}(d_1)h_{2r}(d_2)|}{d_1d_2}+\sum\limits_{\substack{d_i\le x+4-j\\j=1,2\\d_k>y}}|h_{1r}(d_1)h_{2r}(d_2)|\\
&\ll &xy^{-\gamma}\exp\left(c_{2}\frac{r^{\gamma}}{\log r}\right)(\log r)^{c_4}+x^{2\alpha}\exp\left(2c_{2}\frac{r^{1-\alpha}}{\log r}\right)\\
&\ll &xy^{-\gamma}\exp\left(c_{5}\frac{r^{\gamma}}{\log r}\right)+x^{2\alpha}\exp\left(2c_{2}\frac{r^{1-\alpha}}{\log r}\right).
\end{eqnarray*}
Taking $1-\alpha = \gamma=\frac{2}{3}$ we have,
\[T_{12}\ll xy^{-\frac{2}{3}}\exp\left(c_{5}\frac{r^{2/3}}{\log r}\right)+x^{2/3}\exp\left(2c_{2}\frac{r^{2/3}}{\log r}\right).\]
Putting $r=(\log\log x)^{3/2}$ we have
\[T_{12}\ll \frac{x}{y^{2/3}}(\log x)^{1/6}+xx^{-1/3}(\log x)^{1/6}\ll \frac{x}{(\log x)^{1/2}}.\]
So as $x\rightarrow \infty,$
\[T_1=o(x).\]
\subsection*{Estimation of $T_2$}
To get an estimate of $T_2$ we will use an technique of R.Warlimont \cite{WAR}.\\
Now, let
\begin{eqnarray*}
N_r= \left\lbrace n\le x |\quad \exists  j\in \left\lbrace1,2\right\rbrace \mbox{ and }\exists  p>r \mbox{ such that } p^m\|n+4-j, |1-g_j(p^m)|>\frac{1}{2}\right\rbrace
\end{eqnarray*}
Decompose $T_2$ into two sums
\begin{align*}
T_2&= \frac{1}{x}\sum_{n\in N_{r}}\left|g_{1r}^{*}(n+3)g_{2r}^{*}(n+2)-R(r,x)\right|
+\frac{1}{x}\sum_{n\notin N_{r}}\left|g_{1r}^{*}(n+3)g_{2r}^{*}(n+2)-R(r,x)\right|\\
&:= T_5+T_6.
\end{align*}
Now,
\begin{eqnarray*}
T_5\ll \frac{1}{x}\sum_{j=1}^{2}\sum_{r<p\le x+4-j}\frac{x+4-j}{p}|1-g_j(p)|^2+ \sum_{j=1}^{2}\sum_{p>r}\sum_{m\ge 2}\frac{1}{p^m}\ll S^{''}(r,x)+(r\log r)^{-1}
\end{eqnarray*}
From $(26)$ and $(27),$ we have
\begin{align*}
&T_6 \le \frac{1}{x}\sum_{j=1}^{2}\sum_{n\le x}\biggl|\sum_{\substack{p^m\parallel n+4-j\\ p>r}}\left(g_j(p^m)-1\right)-\sum_{\substack{p^m\le x\\ p>r}}\frac{g_j(p^m)-1}{p^m}\biggr|+ \frac{1}{x}\biggl|\sum_{\substack{p^m\le x\\ p>r}}\sum\limits_{j=1}^{2}\frac{g_j(p^m)-1}{p^m}-\log R(r,x)\biggr|\\
&+ O\biggl(\frac{1}{x}\sum_{n\le x}\sum_{j=1}^{2}\sum_{\substack{p^m\parallel n+4-j\\ p>r}}\left|g_j(p^m)-1\right|^2\biggr):=T_7+T_8+T_9.
\end{align*}
Now by Cauchy-Schwarz inequality and Tur\'{a}n-Kubilius inequality $($\cite{ELL}, Lemma $4.4)$, we have
\begin{eqnarray*}
T_7&\ll &\bigg(\sum_{j=1}^{2}\sum_{\substack{p^m\le x+4-j\\p>r}}\left|g_j(p^m)-1\right|^2\biggr)^\frac{1}{2}+\frac{1}{x}\ll \biggl(\sum_{j=1}^{2}\sum_{r<p\le x+4-j}\left|g_j(p)-1\right|^2\biggr)^\frac{1}{2}+\biggl(\sum_{p>r}\frac{1}{p^2}\biggr)^\frac{1}{2}+\frac{1}{x}\\
&\ll & \left(S^{''}(r,x)\right)^\frac{1}{2}+(r\log r)^{-\frac{1}{2}}+x^{-1}.
 \end{eqnarray*}
 Now similar to estimation of $S_{62},$ we have
 \begin{eqnarray*}
 T_8\ll \biggl|\sum_{r<p\le x}\sum\limits_{j=1}^{2}\frac{g_j(p^m)-1}{p^m}-\log R(r,x)\biggr|
 \ll \sum_{p>r}\frac{1}{p^2}\ll (r\log r)^{-1}.
\end{eqnarray*}
and
\begin{eqnarray*}
T_9 \ll \frac{1}{x}\biggl\lbrace \sum_{j=1}^{2}\sum_{\substack{p^m\le x+4-j\\ p>r}}\frac{\left|g_j(p^m)-1\right|^2}{p^m}\biggr\rbrace
\ll  S^{''}(r,x)+(r\log r)^{-1}.
\end{eqnarray*}
Combining above calculations, we have
\[T_2\ll (r\log r)^{-1/2}+(S(r,x))^{1/2}+x^{-1}.\]
By the above choice of $r$ and from $(10)$ we have as $x\rightarrow \infty,$
\[T_2=o(x).\]
Which proves the required Theorem.
\subsection{\textbf{Proof of Theorem $\bf{4}$}}
\begin{eqnarray*}
R&:=&M_x^{'}(g_1,g_2,g_3)-M_x(g_3)P^{''}(r,x)Q(r)=P^{''}(r,x)\left(M_x^{'}(g_{1r},g_{2r},g_3)-M_x(g_3)Q(r)\right)\\
&+&\frac{1}{x}\sum\limits_{n\le x}g_{1r}(n+3)g_{2r}(n+2)g_3(n+1)\left(g_{1r}^{\ast}(n+3)g_{2r}^{\ast}(n+2)-P^{''}(r,x)\right)
\end{eqnarray*}
It is easy to see that $|P^{''}(r,x)|\le 1.$ Therefore
\begin{eqnarray*}
R&\ll &\left|M_x^{'}(g_{1r},g_{2r},g_3)-M_x(g_3)Q(r)\right|+\frac{1}{x}\sum\limits_{n\le x}\left|g_{1r}^{\ast}(n+3)g_{2r}^{\ast}(n+2)-P^{''}(r,x)\right|\\
&:=& U_1+U_2.
\end{eqnarray*}
\subsection*{Estimation of $U_1$}
\begin{align*}
&M_x^{'}(g_{1r},g_{2r},g_3)=\frac{1}{x}\sum\limits_{n\le x}\sum\limits_{d_1|n+3}\sum\limits_{d_2|n+2}h_{1r}(d_1)h_{2r}(d_2)g_3(n+1)\\
&=\frac{1}{x}\sum\limits_{d_1\le x+3}h_{1r}(d_1)\sum\limits_{\substack{d_2\le x+2\\(d_1,d_2)=1}}h_{2r}(d_2)\sum\limits_{\substack{2\le n\le x+1\\n\equiv v(d_1d_2)}}g_3(n)\\
&=\frac{1}{x}\sum\limits_{\substack{d_j\le y\\j=1,2\\(d_1,d_2)=1}}h_{1r}(d_1)h_{2r}(d_2)\sum\limits_{\substack{2\le n\le x+1\\n\equiv v(d_1d_2)}}g_3(n)+\frac{1}{x}\sum\limits_{k=1}^{2}\sum\limits_{\substack{d_j\le x+4-j\\j=1,2\\d_k>y}}h_{1r}(d_1)h_{2r}(d_2)\left(\frac{x}{d_1d_2}+1\right)\\
&:=P_2+U_{11},
\end{align*}
where $v$ is the unique solution of the system $n\equiv-2(d_1), n\equiv-1(d_2)$ and $y:=x^{1/4}(\log x)^{-\frac{\beta}{2}}.$\\
From $(29),(30),$  for $0<\alpha,\gamma<1,$ we have
\begin{eqnarray*}
U_{11}&\ll &y^{-\gamma}\exp\left(c_{6}\frac{r^\gamma}{\log r}\right)+x^{1-2\alpha}\exp\left(2c_2\frac{r^\alpha}{\log r}\right)\\
&\ll &x^{-\gamma/4}(\log x)^{\frac{\gamma \beta}{2}}\exp\left(c_{6}\frac{r^\gamma}{\log r}\right)+x^{1-2\alpha}\exp\left(2c_2\frac{r^\alpha}{\log r}\right).
\end{eqnarray*}
\begin{align*}
&P_2=\frac{1}{x}\sum\limits_{d_1\le y}\sum\limits_{\substack{d_2\le y\\(d_1,d_2)=1}}\frac{h_{1r}(d_1)h_{2r}(d_2)}{\phi(d_1d_2)}\sum\limits_{\substack{n\le x\\(n,d_1d_2)=1}}g_3(n)+O\Biggl(\frac{1}{x}\sum\limits_{\substack{d_j\le y\\j=1,2}}|h_{1r}(d_2)h_{2r}(d_2)|\Biggr)\\
&+O\Biggl(\frac{1}{x}\sum\limits_{\substack{d_j\le y\\j=1,2}}|h_{1r}(d_1)h_{2r}(d_2)|\Biggl|\sum\limits_{\substack{n\le x\\n\equiv v(d_1d_2)}}g_3(n)-\frac{1}{\phi(d_1d_2)}\sum\limits_{\substack{n\le x\\(n,d_1d_2)=1}}g_3(n)\Biggr|\Biggr):=P_3+U_{12}+U_{13}.
\end{align*}
By Lemma $6$ and Lemma $7,$ we have
\begin{align*}
U_{13}&\ll \frac{1}{x}\Biggl(\sum\limits_{l\le y^2}\Biggl|\sum\limits_{\substack{n\le x\\n\equiv v(l)}}g_3(n)-\frac{1}{\phi(l)}\sum\limits_{\substack{n\le x\\(n,l)=1}}g_3(n)\Biggr|\Biggr)^{1/2}\\
&\Biggl(\sum\limits_{\substack{d_j\le y\\j=1,2}}|h_{1r}(d_1)|^2|h_{2r}(d_2)|^2\Biggl|\sum\limits_{\substack{n\le x\\n\equiv v(d_1d_2)}}g_3(n)-\frac{1}{\phi(d_1d_2)}\sum\limits_{\substack{n\le x\\(n,d_1d_2)=1}}g_3(n)\Biggr|\Biggr)^{1/2}\\
&\ll \frac{1}{(\log x)^{A/2}}\Biggl(\sum\limits_{\substack{d_j\le y\\j=1,2}}\frac{|h_{1r}(d_1)|^2|h_{2r}(d_2)|^2}{\phi(d_1)\phi(d_2)}\Biggr)^{1/2}.
\end{align*}
Since
\[\sum\limits_{d\le y}\frac{|h(d)|^2}{\phi(d)}\le \exp\biggl(c_7\sum\limits_{p\le r}\frac{1}{p}\biggr)\le (\log r)^{c_8},\]
we have
\[U_{13}\ll \frac{(\log r)^{c_8}}{(\log x)^{A/2}}.\]
Now from $(29)$ we get
\[U_{12}\ll \frac{y^{2(1-\alpha)}}{x}\sum\limits_{\substack{d_j=1\\j=1,2}}^{\infty}\frac{|h_{1r}(d_1)h_{2r}(d_2)|}{d_1^{1-\alpha}d_2^{1-\alpha}}\ll x^{-\frac{1}{2}(1+\alpha)}(\log x)^{-\beta (1-\alpha)}\exp\left(2c_2\frac{r^\alpha}{\log r}\right).\]
Using Lemma $5$, we get
\begin{eqnarray*}
P_3&=&\frac{1}{x}\sum\limits_{d_1\le y}\sum\limits_{\substack{d_2\le y\\(d_1,d_2)=1}}\frac{h_{1r}(d_1)h_{2r}(d_2)}{\phi(d_1d_2)}\theta_{\tau}(d_1d_2)\sum\limits_{n\le x}g_3(n)\\
&+&O\Biggl(\sum\limits_{\substack{d_j\le y\\j=1,2\\(d_1,d_2)=1}}\frac{|h_{1r}(d_1)h_{2r}(d_2)|}{\phi(d_1)\phi(d_2)}\frac{(\log\log 3d_1d_2)^2}{(\log x)^{1/19}}\Biggr):=P_4+U_{14}.
\end{eqnarray*}
Now,
\begin{align*}
&\sum\limits_{d_1\le y}\sum\limits_{\substack{d_2\le y\\(d_1,d_2)=1}}\frac{h_{1r}(d_1)h_{2r}(d_2)}{\phi(d_1)\phi(d_2)}\theta_{\tau}(d_1)\theta_{\tau}(d_2)=\sum\limits_{d_1=1}^
{\infty}\sum\limits_{\substack{d_2=1\\(d_1,d_2)=1}}^{\infty}
\frac{h_{1r}(d_1)h_{2r}(d_2)}{\phi(d_1)\phi(d_2)}\theta_{\tau}(d_1)\theta_{\tau}(d_2)\\
&+O\Biggl(\sum\limits_{k=1}^{2}
\sum\limits_{\substack{d_j\le y\\j=1,2\\d_k>y}}\frac{h_{1r}(d_1)h_{2r}(d_2)}{\phi(d_1)\phi(d_2)}\theta_{\tau}(d_1)\theta_{\tau}(d_2)\Biggr):=P_5+U_{15}.
\end{align*}
Now from the following two estimation
\[ \sum\limits_{d>y}\frac{|h_{jr}(d)\theta_{\tau}(d)|}{\phi(d)}\le y^{-\alpha}\exp\biggl(c_{9} \sum\limits_{p\le r}\frac{1}{p^{1-\alpha}}\biggr)\ll x^{-\frac{1-\alpha}{4}}(\log x)^{\frac{\beta (1-\alpha)}{2}}\exp\left(c_{10}\frac{r^{\alpha}}{\log r}\right).\]
and 
\[\sum\limits_{d=1}^{\infty}\frac{|h_{jr}(d)\theta_{\tau}(d)|}{\phi(d)}\le \exp\biggl(c_{11}\sum\limits_{p\le r}\frac{1}{p}\biggr)\le (\log r)^{c_{12}}.\]
we can sat that
\[U_{15}\ll x^{-\frac{1-\alpha}{4}}(\log x)^{\frac{\beta (1-\alpha)}{2}}\exp\left(c_{13}\frac{r^{\alpha}}{\log r}\right).\]
\begin{eqnarray*}
P_5&=&\prod_{p\le r}\biggl(1+\sum\limits_{m=1}^{\infty}\frac{\left(h_1(p^m)+h_2(p^m)\right)
\theta_{\tau}(p^m)}{\phi(p^m)}\biggr)=\prod_{p\le r}\biggl(1-\frac{2\theta_{\tau}(p)}{p-1}+\theta_{\tau}(p)\sum\limits_{m=1}^{\infty}\frac{g_1(p^m)+g_2(p^m)}{p^m}\biggr)\\
&=&Q(r).
\end{eqnarray*}
Since
\begin{eqnarray*}
\sum\limits_{d=1}^{\infty}\frac{|h_{jr}(d)|\log\log d}{\phi(d)}&=&\prod\limits_{p\le r}\biggl(1+\sum\limits_{\alpha=1}^{\infty}\frac{|h_{jr}(p^\alpha)|\alpha\log\log p}{\phi(p^m)}\biggr)\ll \prod\limits_{p\le r}\left(1+\frac{2p\log\log p}{(p-1)^2}\right)\\
&\ll &\exp\biggl(c_{14}\sum\limits_{p\le r}\frac{\log\log p}{p}\biggr)\ll \exp\left(c_{15}(\log\log r)^2\right),
\end{eqnarray*}
then we can say that
\[U_{14}\ll \frac{\exp\left(2c_{15}(\log\log r)^2\right)}{(\log x)^{1/19}}.\]
From the similar calculation as Estimation of $T_2$, we have
\[U_2\ll (S^{''}(r,x))^{1/2}+(r\log r)^{-1/2}.\]
Combining these results, we get
\begin{align*}
&R\ll \left(x^{-\frac{\gamma}{4}}(\log x)^{\gamma \beta}+x^{\frac{\alpha-1}{4}}(\log x)^{\frac{\beta (1-\alpha)}{2}}\right)\exp\left(c_{16}\frac{r^\gamma}{\log r}\right)+x^{1-2\alpha}\exp\left(c_{17}\frac{r^\alpha}{\log r}\right)+\frac{(\log r)^{c_8}}{(\log x)^{\frac{A}{2}}}\\
&+x^{-\frac{(1+\alpha)}{2}}(\log x)^{\beta (1-\alpha)}\exp\left(c_{18}\frac{r^{\alpha}}{\log r}\right)+\frac{\exp\left(2c_{15}(\log\log r)^2\right)}{(\log x)^{1/19}}+(S^{''}(r,x))^{\frac{1}{2}}+(r\log r)^{-\frac{1}{2}}.
\end{align*}
By choosing $\alpha=\gamma,$ we get the required theorem. 
\section{\textbf{Proof of Therorem $\bf{5}$}}

Let us put,
\[T(r,x)=\prod\limits_{r<p\le x}\biggl\lbrace 1-\frac{1}{p}+\left(1-\frac{1}{p}\right)\sum\limits_{m=1}^{\infty}\frac{g_1(p^m)}{p^m}\biggr\rbrace.\]
Now,
\begin{eqnarray*}
\sum\limits_{n\le x}g_1(n+3)\mu(n+2)\mu(n+1)&=&T(r,x)\sum\limits_{n\le x}g_{1r}(n+3)\mu(n+2)\mu(n+1)\\
&+& \sum\limits_{n\le x}g_{1r}(n+3)\mu(n+2)\mu(n+1)\left(g_{1r}^{*}(n+3)-T(r,x)\right). 
\end{eqnarray*}
It is easy to see that $|T(r,x)|\le 1.$Therefore,
\begin{eqnarray*}
\Bigl|\sum\limits_{n\le x}g_1(n+3)\mu(n+2)\mu(n+1)\Bigr|&\le &\Bigl|\sum\limits_{n\le x}g_{1r}(n+3)\mu(n+2)\mu(n+1)\Bigr|+\sum\limits_{n\le x}\left| g_{1r}^{*}(n+3)-T(r,x)\right|\\
&:=&V_1 + V_2.
\end{eqnarray*}
\subsection*{Estimation of $V_1$}
\begin{eqnarray*}
V_1= \biggl|\sum\limits_{n\le x}\sum\limits_{d|n+3}h_{1r}(d)\mu(n+2)\mu(n+1)\biggr|=\biggl|\sum\limits_{d\le x+3}h_{1r}(d)\sum\limits_{\substack{n\le x\\ d|n+3}}\mu(n+2)\mu(n+1)\biggr|\\
\le \biggl|\sum\limits_{d\le y}h_{1r}(d)\sum\limits_{\substack{n\le x\\ n\equiv -3(d)}}\mu(n+2)\mu(n+1)\biggr|+\biggl|\sum\limits_{y<d\le x+3}h_{1r}(d)\sum\limits_{\substack{n\le x\\ n\equiv -3(d)}}\mu(n+2)\mu(n+1)\biggr|:=V_{11}+V_{12},
\end{eqnarray*}
where $y:=\log x.$\\
Under Assumption $1$, we have
\begin{align*}
&\sum\limits_{\substack{n\le x\\ n\equiv -3(d)}}\mu(n+2)\mu(n+1)=\sum\limits_{n\le x}\mu(n+2)\mu(n+1)\frac{1}{d}\sum\limits_{l=1}^{d}\exp\left(\frac{(n-3)l}{d}\right)\\
&=\frac{1}{d}\sum\limits_{l=1}^{d}\exp\left(\frac{-3l}{d}\right)\sum\limits_{n\le x}\mu(n+2)\mu(n+1)\exp\left(\frac{nl}{d}\right)\ll  \frac{x}{(\log x)^A}. 
\end{align*}
By choosing $A=3$ we have,
\begin{eqnarray*}
V_{11}&\ll & \frac{x}{(\log x)^3}\sum\limits_{d\le y}|h_{1r}(d)|\ll \frac{xy^2}{(\log x)^3}\sum\limits_{d=1}^{\infty}\frac{|h_{1r}(d)|}{d^2}\ll  \frac{x}{\log x}.
\end{eqnarray*}
Now using $(29),$ for $0<\alpha <1,$ we have
\begin{eqnarray*}
V_{12}&\ll &\sum\limits_{y<d\le x+3}|h_{1r}(d)|\left(\frac{x}{d}+1\right)\ll \sum\limits_{y<d\le x+3}|h_{1r}(d)|\ll (x+3)\sum\limits_{d>y}\frac{|h_{1r}(d)|}{d}\\
&\ll & \frac{(x+3)}{y^\alpha} \sum\limits_{d=1}^{\infty}\frac{|h_{1r}(d)|}{d^{1-\alpha}}
\ll \frac{x}{y^\alpha}\exp\left(c_{2}\frac{r^\alpha}{\log r}\right)
\end{eqnarray*}
By taking $r=(\log \log x)^\frac{1}{\alpha}$, we have
\[V_{12}\ll xy^{-\alpha}y^{\frac{\alpha}{2}}=\frac{x}{(\log x)^{\frac{\alpha}{2}}}.\]
So as $x\rightarrow \infty$ we have,
\[V_1=o(x).\]
From the similar calculation as Estimation of $T_2$, we have
\[V_2\ll (r\log r)^{-\frac{1}{2}}+\biggl(\sum\limits_{r<p\le x+3}\frac{|g_1(p)-1|^2}{p}\biggr)^{1/2}.\]
From $(12)$ and $r=(\log\log x)^{\frac{1}{\alpha}}$ we have as $x\rightarrow \infty,$ 
\[V_2=o(x).\]
Which proves the required Theorem.
\section{\textbf{Proof of Applications}}
\subsection*{Proof of Application $1$}
We need the following lemma.
\begin{lem}[\cite{TEN}]
Let $\left\lbrace u_{n}\right\rbrace_{n=1}^{\infty}$ and  $\left\lbrace v_n\right\rbrace_{n=1}^{\infty}$ be two complex sequences such that 
\[\sum\limits_{n=1}^{\infty}\left(|u_n|^2+|v_n|\right)<\infty\]
Then,
\[\prod\limits_{n=1}^{\infty}(1+u_n+v_n)<\infty \text{ if and only if } \sum\limits_{n=1}^{\infty}u_n<\infty.\]
\end{lem}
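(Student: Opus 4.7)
The plan is to reduce the convergence of the infinite product to the convergence of an associated infinite series by taking logarithms, and then use the hypothesis $\sum(|u_n|^2+|v_n|)<\infty$ to show that the only ``essential'' part of that series is $\sum u_n$.

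First I would note that $\sum|u_n|^2<\infty$ forces $|u_n|\to 0$, and $\sum|v_n|<\infty$ forces $|v_n|\to 0$, so $w_n:=u_n+v_n\to 0$. In particular, for all sufficiently large $n$ we have $|w_n|\le 1/2$, so $1+w_n$ lies away from zero and the principal branch of $\log(1+w_n)$ is defined. The product $\prod(1+w_n)$ then converges (to a nonzero finite value) if and only if the series $\sum\log(1+w_n)$ converges; this is the standard equivalence for infinite products and it is the step that lets me work additively from here on.

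Next I would use the Taylor expansion
\[
\log(1+w_n)=w_n+O(|w_n|^2)\qquad(|w_n|\le 1/2),
\]
and show that $\sum|w_n|^2<\infty$. Indeed $|w_n|^2\le 2|u_n|^2+2|v_n|^2$, and since $|v_n|\le 1$ eventually we have $|v_n|^2\le|v_n|$, so the hypothesis gives $\sum|v_n|^2<\infty$ and hence $\sum|w_n|^2<\infty$. Therefore $\sum\log(1+w_n)$ differs from $\sum w_n=\sum(u_n+v_n)$ by an absolutely convergent series, and consequently the two have the same convergence behavior. Finally, because $\sum v_n$ is absolutely convergent by hypothesis, $\sum(u_n+v_n)$ converges if and only if $\sum u_n$ does. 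Chaining these equivalences yields the claim.

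There is no serious obstacle here; the only point that requires a little care is the bookkeeping around the $v_n$ terms, namely using $\sum|v_n|<\infty$ both to get $\sum|v_n|^2<\infty$ (via $|v_n|^2\le|v_n|$ eventually) and to get absolute convergence of $\sum v_n$, so that the passage from $\sum(u_n+v_n)$ to $\sum u_n$ is legitimate. If one wanted to be explicit about nonvanishing of the limit of the product, one would just note that $\sum|\log(1+w_n)-w_n|<\infty$ forces $\sum\log(1+w_n)$ to have a finite real part whenever it converges, hence the product tends to a nonzero limit, which is what ``$<\infty$'' is taken to mean in the statement.
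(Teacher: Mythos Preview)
Your proof is correct and is the standard argument for this classical lemma. The paper does not actually supply a proof of this statement; it is quoted as Lemma~8 with a citation to Tenenbaum's book \cite{TEN} and used as a black box, so there is nothing in the paper to compare your argument against beyond noting that what you wrote is essentially the textbook proof one finds in that reference.
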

Since $|g_j(p)-1|^2\le 2(1-\Re(g_j(p))),$ from $(5)$ we have,
\begin{align}
\sum\limits_{p}\frac{|g_j(p)-1|^2}{p}< \infty.
\end{align}
So by putting $r=\log x,$ the error term in Theorem $1$  is $o(1)$  as $x\rightarrow \infty.$\\
Now the infinite product $\prod\limits_{p}w_p$ can be written as
\[\prod\limits_{p}w_p=w_2\prod\limits_{p>2}w_p=w_2\prod\limits_{p}\biggl\lbrace1+
\frac{g_1(p)+g_2(p)+g_3(p)-3}{p}+O\left(\frac{1}{p^2}\right)\biggr\rbrace.\]
From $(5)$ and Lemma $8$ we can say that, the above infinite product is convergent.
Hence, by Theorem $1$ Application is proved.
\subsection*{\textbf{Proof of Application $2$}}
We need the Lemma $8$ and the following lemma:
\begin{lem}[\cite{KAT}, Lemma 6]
Let $F(n)$ be a polynomial as above of degree $v\ge 2$. we have the relation:
\[\mathbf{card}\left(n\le x:F(n)\equiv 0\pmod {p^{v-1}}, y_1<p<\infty\right)=o(x),\]
when $y_1$ tends to inifinity as $x\rightarrow \infty.$
\end{lem}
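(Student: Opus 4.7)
The plan is to bound the set in question via a straightforward sieve-style union bound. Let $A_p = \{n \le x : F(n) \equiv 0 \pmod{p^{v-1}}\}$; then the cardinality we want is at most $\sum_{p > y_1} |A_p|$. Since for large $n$ we have $F(n) \ne 0$ and $|F(n)| \le F(x) \ll x^v$, the divisibility $p^{v-1} \mid F(n)$ forces $p \le F(x)^{1/(v-1)}$, so the sum runs only over primes $y_1 < p \le F(x)^{1/(v-1)}$. Lemma 1 furnishes $\varrho(p^{v-1}) \le c_F$ uniformly in $p$ (with at most finitely many exceptions dividing the discriminant, whose contribution is trivially $O(1)$), so $|A_p| \ll x/p^{v-1} + 1$ in this range.

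I would then split the sum at the threshold $p = x^{1/(v-1)}$. In the low range $y_1 < p \le x^{1/(v-1)}$, the estimate $|A_p| \ll x/p^{v-1}$ gives
\[
\sum_{y_1 < p \le x^{1/(v-1)}} |A_p| \ll x \sum_{p > y_1} \frac{1}{p^{v-1}},
\]
which is $o(x)$ once $y_1 \to \infty$ (using convergence of the series, which holds at least for $v \ge 3$). In the high range $x^{1/(v-1)} < p \le F(x)^{1/(v-1)}$, the naive bound $|A_p| \ll 1$ summed against the prime counting function is too weak, so one must reverse the order of summation: for each fixed $n$, the number of primes $p$ with $p^{v-1} > x$ and $p^{v-1} \mid F(n)$ is at most $\log F(n)/\log x \le v$, since the product of such prime powers divides $F(n) \ll x^v$. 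Thus the high-range contribution to $\sum_p |A_p|$ is at most $v$ times the number of distinct $n \le x$ admitting such a prime, which either yields a direct bound or, combined with the low-range estimate, closes a short bootstrap.

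The main obstacle is precisely this high-prime regime: the trivial bound gives $O(\pi(F(x)^{1/(v-1)}))$, which is not $o(x)$, and the only honest way out is to exploit that each $n$ has only a bounded number of such heavy prime-power divisors. This is essentially Hooley's squarefree-sieve argument applied to polynomial values, which works here because $F$ is assumed free of repeated irreducible factors. I would execute this counting carefully and then combine with the low-range estimate to conclude $|\bigcup_{p > y_1} A_p| = o(x)$ as $x \to \infty$ along any $y_1 = y_1(x) \to \infty$.
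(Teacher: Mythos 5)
You have located the difficulty correctly, but your treatment of it is circular, and the lemma does not follow from your sketch. Write $N$ for the number of $n\le x$ admitting a prime $p$ with $p^{v-1}\mid F(n)$ and $p^{v-1}>x$. Your union bound gives $N\le\sum_{p}|A_p|$, and your ``reversal of summation'' gives $\sum_{p:\,p^{v-1}>x}|A_p|\le vN$; chaining the two yields only $N\le vN$, which says nothing. The observation that each $n$ has boundedly many heavy prime-power divisors can never bound $N$, since the number of pairs $(n,p)$ and the number of bad $n$ are comparable to begin with: there is no bootstrap to close. Note also that the splitting point is not the real issue: the union bound in fact works for all $p\ll x/(\log x)^{2}$, because $\varrho(p^{v-1})\le c_F$ and $\pi\left(x/(\log x)^{2}\right)=o(x)$; what it cannot reach is exactly the range $x\ll p\le c\,F(x)^{1/(v-1)}$. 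For irreducible $F$ that remaining range is the hard step of the power-free-values problem, namely that a degree-$v$ polynomial takes $(v-1)$-free values with the expected density: Erd\H{o}s (1953) obtained only infinitude of such values, and the density statement needed here is a theorem of Hooley, proved by working in the number field generated by a root of $F$ and counting ideals/lattice points --- nothing resembling a sieve union bound (for $v=3$ it is his well-known result on squarefree values of cubic polynomials). So the step you propose to ``execute carefully'' is the entire content of the lemma, and invoking Hooley there is not a proof sketch but a citation of the result being proved. One case where your framework genuinely does close: if $F$ is reducible (with no repeated irreducible factors), a sufficiently large prime divides at most one irreducible factor $G$ of $F$, of degree $\le v-1$, so $p^{v-1}\mid G(n)$ forces $p\ll x$ and the union bound suffices; the gap is precisely the irreducible case.

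Two further points of comparison. The paper itself offers no proof at all: Lemma 9 is imported verbatim from Kat\'ai \cite{KAT}, so the ``paper's proof'' is a citation, and an honest completion of your argument would likewise have to cite (or reproduce) the Erd\H{o}s--Hooley machinery for the large-prime range. Second, the statement as transcribed is actually false for $v=2$: then $p^{v-1}=p$, and for slowly growing $y_1$ all but the $o(x)$ integers $n\le x$ for which $F(n)$ is $y_1$-smooth satisfy $p\mid F(n)$ for some $p>y_1$, so the cardinality is $x-o(x)$, not $o(x)$. You brushed against this (your series $\sum_p p^{-(v-1)}$ diverges when $v=2$) but did not draw the conclusion; in the paper the lemma is only ever needed with exponent $v_j-1\ge 2$, i.e.\ for $v_j\ge 3$, the case $v_j=2$ of Application 2 being covered directly by hypothesis $(14)$.
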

Since $|g_j(p)-1|^2\le 2(1-\Re(g_j(p))),$ from $(13)$ we have,
\begin{align}
\sum\limits_{p}\frac{|g_j(p)-1|^2\varrho_j(p)}{p}< \infty, j=1,2,3.
\end{align}
From $(13)$ and Lemma $8,$ we have $P_1(\gamma)$ and $P_2(\gamma)$ are convergent.\\
From Lemma $9,$ it is easy to see that as $p\rightarrow \infty$ 
\begin{align}
\left(1-g_j(p^{v_j-1})\right)\varrho_j(p^{v_j-1})\rightarrow 0,j=1,2,3.
\end{align}
So by putting $r=\log x$ and from $(14),(32),(33),$ the error term in Theorem $2$ is $o(1)$ as $x\rightarrow \infty.$ 
Hence, by Theorem $2$ Application is proved.
\subsection*{\textbf{Proof of Application $3$}}
 It is easy to see that $\sigma_a$ and $\phi$ are close to $1$ for $a>0.$ 
The remainder term for both sums are estimate from the remainder term of Theorem $1$ by choosing 
\[\beta = 1-\alpha = \min\left(\frac{1}{2k}, \frac{2+c_{19}}{3}\right) \text{ and } r=c_{20}(\log x\log\log x)^{\frac{1}{\beta}},\]
for sufficiently small $c_{19},c_{20}>0.$\\
Hence, by Theorem $1$ Application is proved.        
\subsection*{\textbf{Proof of Application $4$ and $5$}}
Since $\phi$ and $\sigma_a$ are close to $1$ for $a>0$ then by applying Theorem $3$  and Theorem $5$ we get the Application $4$ and Application $5$ respectively. 
\subsection*{\textbf{Proof of Application $6$}}
we need the following lemma:
\begin{lem}
Let $p$ be an odd prime and $(a,p)=1$, then $x^2\equiv a\pmod {p^{k}}$ has exactly two solutions if $a$ is a quadratic residue of $p,$ and no solution if $a$ is quadratic nonresidue of $p.$ Further, if $a$ is odd, then the congruence $x^2\equiv a\pmod 2$is alwalys solvable and has exactly one solution.
\end{lem}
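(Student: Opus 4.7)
The plan is to prove the three assertions of the lemma separately, relying on standard structure of the unit group modulo a prime power and a direct appeal to Hensel's lemma. The hypothesis $(a,p)=1$ ensures we stay in the unit group throughout, and oddness of $p$ is exactly what makes both $\pm x_0$ distinct mod $p$ and $2x_0$ invertible mod $p$, which are the two places oddness enters.

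First, I would dispose of the base case $k=1$ for odd $p$. Since $(\mathbb{Z}/p\mathbb{Z})^{\ast}$ is cyclic of order $p-1$, the squaring endomorphism has kernel $\{\pm 1\}$, which has order $2$ because $p$ is odd. Therefore the map is exactly $2$-to-$1$ onto its image, so if $a$ is a quadratic residue mod $p$ the congruence $x^2\equiv a\pmod p$ has exactly two solutions, and otherwise none. Next, I would lift from $p$ to $p^k$ using Hensel's lemma. If $x_0^2\equiv a\pmod p$ then $x_0$ is a unit, and the derivative of $f(x)=x^2-a$ at $x_0$ is $2x_0$, which is a unit mod $p$ because $p$ is odd. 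Hensel's lemma then produces a unique lift $\tilde x\pmod{p^k}$ with $\tilde x\equiv x_0\pmod p$ and $\tilde x^2\equiv a\pmod{p^k}$. Applied to the two mod-$p$ roots $\pm x_0$, which remain distinct modulo $p^k$ (they are distinct already mod $p$), this gives exactly two solutions modulo $p^k$; conversely any solution mod $p^k$ reduces to a mod-$p$ root, so there are no others. If $a$ is a non-residue mod $p$ then there is no mod-$p$ root to lift, so there is no solution mod $p^k$ either.

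Finally, for $p=2$ with $a$ odd, the congruence $x^2\equiv a\pmod 2$ simply reads $x^2\equiv 1\pmod 2$, which forces $x$ to be odd; there is exactly one odd residue class modulo $2$, giving exactly one solution. I do not expect any genuine obstacle here; the single careful point is to verify the Hensel hypothesis $p\nmid f'(x_0)$, which rests squarely on $p$ being odd and $x_0$ being a unit (itself forced by $(a,p)=1$).
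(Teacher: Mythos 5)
Your proof is correct, and it is the standard argument: the squaring map on the cyclic group $(\mathbb{Z}/p\mathbb{Z})^{\ast}$ is exactly $2$-to-$1$ onto its image, Hensel's lemma (valid because $f'(x_0)=2x_0$ is a unit when $p$ is odd and $(a,p)=1$) lifts each of the two roots $\pm x_0$ uniquely to a root modulo $p^k$, and the case $p=2$ is immediate since $x^2\equiv a\pmod 2$ with $a$ odd just says $x$ is odd. Note that the paper itself supplies no proof of this lemma at all — it is stated as a known classical fact and used directly in the proof of Application $6$ — so your write-up fills a gap the paper leaves to the reader rather than diverging from an argument in the text.
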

We see that by Lemma $10,$ as $a>0,$  $p\rightarrow \infty$
\begin{align}
\sum\limits_{p}\frac{(g_j(p)-1)\varrho_j(p)}{p}= \sum\limits_{p}\frac{2}{p^{1+a}}< \infty, \quad (g_j(p)-1)\varrho_j(p)= \frac{2}{p^a}\rightarrow 0, 
\end{align}
where $g_j=\sigma_a,j=1,2,3,$ and 
\begin{align}
\sum\limits_{p}\frac{(g_j(p)-1)\varrho_j(p)}{p}= \sum\limits_{p}\frac{-2}{p^2}< \infty, \quad (g_j(p)-1)\varrho_j(p)= \frac{-2}{p}\rightarrow 0, 
\end{align}
where $g_j=\phi, j=1,2,3.$\\
The remainder term for both suma are estimate from the remainder term of Therorem $2$ by choosing
\[\alpha=\frac{5+c_{21}}{6} \quad \text{and} \quad r=c_{22}(\log x\log\log x)^{1/\alpha},\]
for sufficiently small $c_{21},c_{22}>0.$\\
Hence, by Theorem $2$ Application is proved.
\subsection*{\textbf{Proof of Application $7$}}
we need the following lemma:
\begin{lem}[\cite{TEN}]
Let $\left\lbrace F_n\right\rbrace_{n=1}^{\infty}$ be a sequence of distribution functions and $\left\lbrace\phi_n\right\rbrace_{n=1}^{\infty}$ then corresponding sequence of characteristic functions. Then $F_n$ converges weekly to the distribution function $F$ if and only if $\phi_n$ converges pointwise on $\mathbb{R}$ to a function $\phi$ which is continuous at $0.$ In addition, in this case, $\phi$ is the characteristic function of $F$ and the convergence of $\phi_n$ to $\phi$ is uniform on any compact subset.
\end{lem}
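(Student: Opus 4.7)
\medskip

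The plan is to prove the two directions separately, since this is the classical Lévy continuity theorem and the forward implication is considerably easier than the reverse. For the forward direction, I would observe that $x \mapsto e^{itx}$ is a bounded continuous function for each fixed $t$, so if $F_n \Rightarrow F$ weakly, the portmanteau theorem (applied to real and imaginary parts) gives $\phi_n(t) = \int e^{itx}\,dF_n(x) \to \int e^{itx}\,dF(x) = \phi(t)$ pointwise. To upgrade pointwise convergence to uniform convergence on compact sets, I would use equicontinuity: from the bound $|\phi_n(t+h)-\phi_n(t)| \le \int |e^{ihx}-1|\,dF_n(x)$, splitting the integral at $|x|\le A$ and using tightness of $\{F_n\}$ (which follows from weak convergence and Prokhorov), one gets an $h$-modulus independent of $n$, and combined with pointwise convergence this yields uniform convergence on any compact interval.

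For the reverse direction, suppose $\phi_n \to \phi$ pointwise with $\phi$ continuous at $0$; I want to deduce that $\{F_n\}$ converges weakly to a distribution function whose characteristic function is $\phi$. The first key step is to establish tightness of $\{F_n\}$. The standard tool is the inequality
\[
 1 - F_n(A) + F_n(-A) \;\le\; \frac{1}{\delta}\int_{-\delta}^{\delta}\bigl(1-\phi_n(t)\bigr)\,dt
\]
for $A = 2/\delta$ (with appropriate normalization), obtained by writing the right-hand side as a double integral in $(t,x)$, switching the order, and using that $1 - \frac{\sin(\delta x)}{\delta x} \ge \tfrac12$ for $|x|\ge 2/\delta$. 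Since $\phi_n \to \phi$ pointwise with $|\phi_n|\le 1$, dominated convergence gives $\int_{-\delta}^\delta(1-\phi_n(t))\,dt \to \int_{-\delta}^\delta(1-\phi(t))\,dt$, and continuity of $\phi$ at $0$ together with $\phi(0)=1$ makes this last quantity arbitrarily small by choosing $\delta$ small. This yields tightness.

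Given tightness, Helly's selection theorem produces a subsequence $F_{n_k}$ converging weakly to some sub-probability distribution function $G$. By the forward direction already established, the characteristic function of $G$ equals $\lim_k \phi_{n_k} = \phi$. In particular $\phi$ is a characteristic function, so $G$ is a genuine probability distribution function $F$; by the uniqueness theorem for characteristic functions (which in turn rests on the Fourier inversion formula), every weak-limit subsequence of $\{F_n\}$ has the same limit $F$, and therefore $F_n \Rightarrow F$. The additional uniform convergence of $\phi_n$ to $\phi$ on compact sets then follows by re-running the equicontinuity argument of the forward direction, now that tightness of $\{F_n\}$ is in hand.

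The main obstacle is the tightness step in the reverse direction: everything else is either a bounded-convergence calculation or an appeal to named results (portmanteau, Helly, Fourier uniqueness). The continuity of $\phi$ at $0$ is used only in that step, and getting the correct quantitative inequality relating the tail mass of $F_n$ to an integral of $1-\phi_n$ is what makes the theorem work; I would follow Tenenbaum's presentation \cite{TEN} at this point.
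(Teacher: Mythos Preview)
The paper does not actually prove this lemma: it is stated with a citation to Tenenbaum \cite{TEN} and used as a black box in the proofs of Applications~7 and~8, with no argument supplied. So there is no ``paper's own proof'' to compare your proposal against.

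That said, your sketch is a correct and standard outline of L\'evy's continuity theorem, and it is essentially the argument one finds in \cite{TEN}. The forward direction via the portmanteau theorem, the upgrade to local uniform convergence via equicontinuity and tightness, and the reverse direction via the tail-mass inequality
\[
1 - F_n(A) + F_n(-A) \;\le\; \frac{C}{\delta}\int_{-\delta}^{\delta}\bigl(1-\operatorname{Re}\phi_n(t)\bigr)\,dt
\]
combined with Helly selection and Fourier uniqueness are exactly the ingredients in the classical proof. Your identification of the tightness step as the crux, and of continuity of $\phi$ at $0$ as the hypothesis that drives it, is accurate. One small remark: in the tail inequality it is really $\operatorname{Re}\phi_n$ that appears (since $1 - \phi_n$ need not be nonnegative), though this is harmless since $|1-\phi_n|\ge 1-\operatorname{Re}\phi_n$ in the relevant direction.
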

The characteristic functions of the distribution $(19)$ equal
\begin{eqnarray}
\frac{1}{[x]}\sum\limits_{n\le x}\exp \left(it(f_1(n+3)+f_2(n+2)+f_3(n+1))\right).
\end{eqnarray}
Since
\begin{align*}
 \sum\limits_{p}\sum\limits_{j=1}^{3}\frac{\exp\left(itf_j(p)\right)-1}{p}=t\sum\limits_{j=1}^{3}\sum\limits_{|f_j(p)|\le 1}\frac{f_j(p)}{p}
+O\biggl(t^2\sum\limits_{j=1}^{3}\biggl( \sum\limits_{|f_j(p)|\le 1}\frac{f_j^2(p)}{p}+\sum\limits_{|f_j(p)|>1}\frac{1}{p}\biggr)\biggr)
\end{align*}
then from the convergence of the series $(16), (17)$ ,$(18)$ and from Lemma $8$ we can say that the infinite product $(20)$ converges for every $t.$ This product is continuous at $t=0$ because it converges uniformly for $|t|\le T$ where $T>0$ is arbitrary.

Since for $j=1,2,3$
\[\sum\limits_{p}\frac{\left|\exp\left(itf_j(p)\right)-1\right|^2}{p}\ll t^2\sum\limits_{|f_j(p)|\le 1}\frac{|f_j(p)|^2}{p}+\sum\limits_{|f_j(p)|>1} 
\frac{1}{p}\]
then from the convergence of $(16)$ and $(17)$ it follows that $S(r,x)\rightarrow 0$ when $r,x\rightarrow \infty.$ Choosing $r=\log x$ in our Theorem $1$ we get that the remainder term disappears when $x\rightarrow \infty.$

Thus the characteristic function $(36)$ has the limit $(20)$ for every real $t$ and this limit is continuous at $t=0.$\\
Therefore, by Lemma $11$ Application is proved.
\subsection*{\textbf{Proof of Application $8$}}
We will use Lemma $8$ and Lemma $11$ to prove this application.
The characteristic functions of the distribution $(24)$ equal
\begin{align}
\frac{1}{[x]}\sum\limits_{n\le x}\exp\left(it\left(f_1(F_1(n))+f_2(F_2(n))+f_3(F_3(n))\right)\right)
\end{align}
Since 
\begin{eqnarray*}
\sum\limits_{p}\sum\limits_{j=1}^{3}\frac{\left(\exp\left(itf_j(p)\right)-1\right)
\varrho_j(p)}{p}=t\sum\limits_{j=1}^{3}\sum\limits_{|f_j(p)|\le 1}\frac{f_j(p)\varrho_j(p)}{p}\\
+O\biggl(t^2\sum\limits_{j=1}^{3}\sum\limits_{|f_j(p)|\le 1}\frac{f_j^2(p)\varrho_j(p)}{p}\biggr)
+ O\biggl(\sum\limits_{j=1}^{3}\sum\limits_{|f_j(p)|>1}\frac{\varrho_j(p)}{p}\biggr).
\end{eqnarray*}
then from the convergence of the series $(21),(22),(23)$ and from Lemma $8$ we can say that $P_1(\gamma)$ and $P_2(\gamma)$ are convergent for every real $t.$ Further, the infinite product $P_1(\gamma)P_2(\gamma)$ is continuous at $t=0$ because it converges uniformly for $|t|\le T$ where $T>0$ is arbitrary.

Since for $j=1,2,3$
\[\sum\limits_{p}\frac{\left|\exp\left(itf_j(p)\right)-1\right|^2\varrho_j(p)}{p}\ll t^2\sum\limits_{|f_j(p)|\le 1}\frac{|f_j(p)|^2\varrho_j(p)}{p}+\sum\limits_{|f_j(p)|>1} 
\frac{\varrho_j(p)}{p}\]
then from the convergence of $(21)$ and $(22)$ it follows that $S^{'}(r,x),T(x)\rightarrow 0$ when $r,x\rightarrow \infty.$\\
Now from $(24)$ and Lemma $9$ it is easy to see that, $\left(\exp\left(itf_j(p^m)-1\right)\varrho_j(p^m)\right)\rightarrow 0$ when $p\rightarrow\infty,m<v_j,j=1,2,3.$ Then $\frac{1}{x}C(r)\rightarrow\ 0$ as $r,x\rightarrow \infty.$ Choosing $r=\log x$ in our Theorem $2$ we get that the remainder term disappears when $x\rightarrow \infty.$
 
Thus the characteristic function $(37)$ has the limit $P_1(\gamma)P_2(\gamma)$ for every real $t$ and this limit is continuous at $t=0.$
 
 Therefore, by Lemma $11$ Application is proved.

\end{document}